\def\fullpage {
\addtolength{\topmargin}{-2.2 cm}
\addtolength{\oddsidemargin}{-1.6 cm} \addtolength{\textwidth}{+2.9 cm}
\addtolength{\textheight}{+3.7 cm}}
\newtheorem{theorem}{Theorem}
\newtheorem{conj}{Conjecture}
\newtheorem{lemma}{Lemma}
\renewcommand{\phi}{\varphi}
\newtheorem{definition}{Definition}
\begin{document}
 \date{}

\title{\vspace{-0.7cm}Cycles in triangle-free graphs of  large chromatic number\thanks{The authors
thank Institute Mittag-Leffler (Djursholm, Sweden) for the hospitality and creative environment.}}

\author{
Alexandr Kostochka \thanks{Department of Mathematics, University of Illinois at Urbana--Champaign, IL, USA and
Sobolev Institute of Mathematics, Novosibirsk 630090, Russia, kostochk@math.uiuc.edu, Research supported in part by NSF grant
 DMS-1266016 and by Grant NSh.1939.2014.1 of the President of
Russia for Leading Scientific Schools.}
\and Benny Sudakov\thanks{Department of Mathematics, ETH, 8092 Zurich.
Email: benjamin.sudakov@math.ethz.ch.
Research supported in part by SNSF grant 200021-149111 and
by a USA-Israel BSF grant.
}
\and Jacques Verstra\"ete\thanks{
Department of Mathematics, University of California, San Diego, CA, USA, jverstra@math.ucsd.edu.
Research supported in part by an Alfred P. Sloan Research Fellowship and NSF Grant DMS-0800704.}
}

\maketitle

\vspace{-0.3in}

\begin{abstract}
More than twenty years ago Erd\H{o}s conjectured~\cite{E1} that a triangle-free graph $G$ of chromatic number
$k \geq k_0(\varepsilon)$ contains cycles of at least $k^{2 - \varepsilon}$
different lengths as $k \rightarrow \infty$.
{In this paper, we prove the
{stronger fact}} that every triangle-free graph $G$ of chromatic number $k \geq k_0(\varepsilon)$ contains cycles of $(\frac{1}{64} - \varepsilon)k^2 \log k$
consecutive lengths, and a cycle of length at least $(\tfrac{1}{4} - \varepsilon)k^2 \log k$.
As there exist triangle-free graphs of chromatic number $k$ with at most roughly $4k^2 \log k$
 vertices for large $k$, theses results are tight up to a constant factor. We also give new lower bounds on the circumference and the number of
 different  cycle lengths  for $k$-chromatic graphs in other monotone classes, in particular, for $K_r$-free graphs and graphs without
 odd cycles $C_{2s+1}$.
\end{abstract}

\section{Introduction}

It is well-known that every $k$-chromatic graph has a cycle of length at least $k$ for $k \geq 3$. In 1991, Gyarf\' as~\cite{G} proved a stronger statement,
namely, the conjecture by Bollob\' as and Erd\H os that
every graph of chromatic number $k \geq 3$ contains cycles of at least $\lfloor \frac{1}{2}(k - 1) \rfloor$ odd lengths.
This is best possible in view of any graph whose blocks are complete graphs of order $k$. Mihok and Schiermeyer~\cite{MS} proved
a similar result for even cycles: every graph  $G$  of chromatic number $k \geq 3$ contains cycles of at least $\lfloor \frac{k}{2}\rfloor - 1$ even lengths.
A consequence of the main result in~\cite{V} is that a graph of chromatic number $k \geq 3$ contains cycles of $\lfloor \frac{1}{2}(k - 1) \rfloor$ consecutive lengths. Erd\H{o}s~\cite{E1} made the following conjecture:

\begin{conj} \label{EC}
For every $\varepsilon > 0$, there exists $k_0(\varepsilon)$ such that for $k \geq k_0(\varepsilon)$, every triangle-free $k$-chromatic graph contains more than $k^{2-\epsilon}$ odd cycles of different lengths.
\end{conj}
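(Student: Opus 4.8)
\medskip
\noindent\textbf{Proof proposal.} The plan is to establish the quantitatively stronger assertion announced above: every triangle-free graph $G$ with $\chi(G)\ge k$ (for $k\ge k_0$) contains cycles of $\Omega(k^2\log k)$ \emph{consecutive} lengths, one of which has length $\Omega(k^2\log k)$. This yields Conjecture~\ref{EC} at once, since among any $m$ consecutive integers at least $\lfloor m/2\rfloor$ are odd, so $G$ then has $\Omega(k^2\log k)>k^{2-\varepsilon}$ cycles of distinct odd lengths once $k$ is large. (For Conjecture~\ref{EC} by itself it would suffice to produce $\Omega(k^{2-\varepsilon})$ consecutive --- or simply odd --- cycle lengths, but there seems to be no shortcut past the difficulty identified below, and the known triangle-free $k$-chromatic graphs on $\Theta(k^2\log k)$ vertices show the bound $k^2\log k$ is of the right order.)

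First come the routine reductions: deleting vertices we pass to a $k$-critical subgraph, which is $2$-connected with minimum degree at least $k-1$, and rename it $G$. Two facts --- both consequences of triangle-freeness --- will be used repeatedly, not only for $G$ but for every triangle-free subgraph of large chromatic number met along the way: by classical results of Shearer and Johansson, a triangle-free graph of chromatic number $t$ has $\Omega(t^2\log t)$ vertices and contains a vertex of degree $\Omega(t\log t)$. Thus $G$ has $\Omega(k^2\log k)$ vertices and maximum degree $\Omega(k\log k)$, and these estimates persist under any recursion costing only a constant factor in the chromatic number.

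The heart of the argument, and the step I expect to be by far the hardest, is to show that $G$ already has a cycle of length $\Omega(k^2\log k)$; equivalently, that a \emph{triangle-free} graph of circumference at most $c$ has chromatic number only $O(\sqrt{c/\log c})$ --- a polynomial improvement over the trivial bound that a graph with a cycle has chromatic number at most its circumference (via depth-first search: the depth classes are independent, non-tree edges join classes at depth-distance at most $c-1$ when the circumference is $c$, so colouring by depth modulo $c$ is proper). Triangle-freeness must be exploited globally here, since minimum degree $k-1$ on its own forces no long cycle whatsoever: complete bipartite graphs and blow-ups of $C_5$ are triangle-free, have minimum degree $\ge k-1$, and yet circumference $O(k)$ --- they fail to be counterexamples only because their chromatic number is tiny. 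A natural first move refines the trivial colouring: grouping the depth classes into consecutive blocks of $c-1$ classes gives $\chi(G)\le 2\max_t\chi(G[B_t])$, and each $G[B_t]$ inherits a depth-first forest of depth at most $c-2$, i.e.\ has tree-depth $O(c)$. One is then reduced to proving that a triangle-free graph of tree-depth $d$ is $O(\sqrt{d/\log d})$-chromatic; carrying this out --- peeling off large independent sets while balancing the depth of the decomposition against the degree $\Omega(t\log t)$ that chromatic number $t$ would force, all with the lower-order losses controlled well enough to reach the clean constant $\tfrac14$ --- is the genuinely new content.

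Granting a cycle of length $\Omega(k^2\log k)$ --- and, with the cycle-building arranged to supply it, a vertex of degree $\Omega(k\log k)$ whose neighbours are spread along a sub-path at spacing $O(k)$ --- the last step converts this into $\Omega(k^2\log k)$ consecutive cycle lengths. This is the quantitative form of the rotation/ear-exchange mechanism behind the result of~\cite{V} that a $k$-chromatic graph has $\lfloor\tfrac12(k-1)\rfloor$ consecutive cycle lengths, now fed a much longer host cycle: the chords from the high-degree vertex realise lengths $O(k)$ apart, and within each short gap the minimum-degree-$(k-1)$ condition supplies detours filling in every intermediate length, so one obtains every length in an interval of size $\Omega(k^2\log k)$. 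Half of these are odd, proving Conjecture~\ref{EC}; optimising constants to land on $(\tfrac14-\varepsilon)k^2\log k$ and $(\tfrac1{64}-\varepsilon)k^2\log k$ is bookkeeping. In summary, the reductions (step one), the density estimates (step two) and the harvesting (step four) adapt known ideas, while beating ``bounded circumference implies bounded chromatic number'' by the correct $\sqrt{\,\cdot\,/\log}$ factor for triangle-free graphs (step three) is the crux.
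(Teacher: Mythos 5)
You correctly identify the target strengthening (circumference $\Omega(k^2\log k)$ plus $\Omega(k^2\log k)$ consecutive cycle lengths), and your supporting density facts are right: Shearer gives $\Omega(t^2\log t)$ vertices and Johansson gives a vertex of degree $\Omega(t\log t)$ in a triangle-free $t$-chromatic graph. Your DFS block decomposition is also sound: when the circumference is at most $c$, every DFS edge spans depths differing by at most $c-1$, so grouping the depth classes into blocks $B_t$ of $c-1$ levels and alternating two disjoint palettes gives $\chi(G)\le 2\max_t\chi(G[B_t])$, and each $G[B_t]$ inherits a DFS forest of depth at most $c-2$.

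But this is exactly where the proof stops. The assertion that a triangle-free graph of tree-depth $d$ has chromatic number $O(\sqrt{d/\log d})$ is not proved; you explicitly concede it as ``the genuinely new content'' and offer only a one-clause heuristic about peeling off independent sets. That is the entire difficulty of the theorem. Moreover the tree-depth reformulation does not obviously help: each $G[B_t]$ is still a triangle-free graph of circumference at most $c$, so bounding $\chi(G[B_t])$ by $O(\sqrt{c/\log c})$ is just the original problem over again (the decomposition only adds the tree-depth hypothesis, and whether that extra structure can be exploited is precisely what is unaddressed); and read literally, ``tree-depth $\le d\Rightarrow\chi=O(\sqrt{d/\log d})$'' is a formally stronger claim than the circumference bound, since tree-depth $d$ permits circumference as large as $2^{d-1}$.

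The paper's route through this bottleneck is entirely different and contains no tree-depth analysis. It proves a general Theorem~\ref{main1} for any monotone class $\mathcal P$: if every $k$-chromatic $G\in\mathcal P$ has at least $f(k)$ vertices for an $\alpha$-bounded $f$, then every such $G$ has a cycle of length $(1-\varepsilon)f(k)$. The proof is by induction on $k$: pass to a nearly $3$-connected $G^*$ (Lemma~\ref{le1}), take a longest cycle $C$ of $G^*-e^*$, and either $\chi(G[C])$ is already close to $k$ (so $|C|\ge n_{\mathcal P}(\chi(G[C]))\ge f(\chi(G[C]))$ is large because $\mathcal P$ is monotone) or $G_2=G^*-G[C]-e^*$ has high chromatic number, in which case one splices $C$ with a long cycle of $G_2$ across $\ell$ Menger paths, with $\ell$ bounded below via a vertex-cut argument (Lemma~\ref{le0}). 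Feeding in $f(k)=(\tfrac14-\delta)k^2\log k$ from Shearer via the Jensen--Toft integral (Lemma~\ref{JT}) gives the circumference bound. Your step four (rotation from a single high-degree vertex) is likewise only a sketch and also differs from the paper's Lemma~\ref{odd}, which extracts consecutive lengths via BFS layering together with the $A,B$-path lemma of~\cite{V} applied to a chorded odd cycle.
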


The second and third authors proved~\cite{SV1} that if $G$ is a graph of average degree $k$ and girth at least five, then
$G$ contains cycles of $\Omega(k^2)$ consecutive even lengths, and in~\cite{SV2} it was shown that if an $n$-vertex graph of independence number at most $\frac{n}{k}$ is triangle-free, then it contains cycles of $\Omega(k^2\log k)$ consecutive lengths.

\subsection{Main Result}

 In this paper, we prove Conjecture \ref{EC} in the following stronger form:

\begin{theorem} \label{main}
For all $\varepsilon > 0$, there exists $k_0(\varepsilon)$ such that for $k \geq k_0(\varepsilon)$, every triangle-free $k$-chromatic graph $G$ contains a cycle
of length at least $(\tfrac{1}{4} - \varepsilon)k^2\log k$ as well as cycles of at least $(\frac{1}{64} - \varepsilon)k^2 \log k$ consecutive lengths.
\end{theorem}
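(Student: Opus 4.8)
The plan is to pass from $G$ to a dense, well-structured subgraph, extract a long cycle from it, and then upgrade that to an interval of cycle lengths; triangle-freeness is used crucially in two different places.

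\emph{Reduction and the source of ``width''.} We may assume that $G$ is vertex-$k$-critical, so that $G$ is $2$-connected, has minimum degree $\ge k-1$, and (since $k\ge 3$) is non-bipartite. The bare minimum-degree bound yields only a cycle of length about $2k$, which is a factor $\Theta(k)$ short of the goal, so the heart of the matter is to locate inside $G$ substructure of ``width'' $\Theta(k\log k)$ and to traverse $\Theta(k)$ copies of it along one cycle. The hypothesis $\chi(G)=k$ supplies the width through the theorem of Ajtai--Koml\'os--Szemer\'edi and Shearer for triangle-free graphs: on one hand $G$ has at least $(\tfrac{1}{4}-o(1))k^2\log k$ vertices (this is where the constant $\tfrac{1}{4}$ ultimately originates), and on the other hand, since by the same methods a triangle-free $(d-1)$-degenerate graph has chromatic number $O(d/\log d)$, the choice $d=(1-\varepsilon')ck\log k$ with $c$ a small absolute constant forces $G$ to contain a subgraph of minimum degree $\ge d$. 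Iterating the latter fact (remove one such subgraph; $\chi$ drops by only a bounded amount since $\chi(A\cup B)\le\chi(A)+\chi(B)$; repeat) produces either $\Omega(k)$ pairwise vertex-disjoint subgraphs $H_1,\dots,H_s$ each of minimum degree $\ge d$, or else a single triangle-free subgraph of still-large chromatic number and bounded maximum degree, to which Shearer's independence-number bound applies to give $\Omega(k^2\log k)$ vertices together with strong expansion between consecutive BFS layers (and hence a long path, closed into a long cycle by $2$-connectivity).

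\emph{The long cycle.} Each $H_i$ has minimum degree $\ge d$ and hence contains a path of length $\ge d$; moreover, at an endpoint of any path in a triangle-free graph the neighbours lie among the path-vertices with no two consecutive, which both lengthens such paths and makes P\'osa-type rotations available. Using the $2$-connectivity of $G$, encoded as an ear decomposition, we splice $\Omega(k)$ of these paths into a single cycle, each piece contributing length $\Omega(k\log k)$, and balancing the number of pieces against their length in the Shearer estimate gives a cycle of length $(\tfrac{1}{4}-\varepsilon)k^2\log k$. The hardest step will be producing one cycle that meets $\Omega(k)$ of the dense pieces while assuming only $2$-connectivity; I expect to handle it by a rotation--extension argument, in the spirit of the proof of Dirac's circumference theorem, applied to a longest path that is forced to meet many of the $H_i$.

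\emph{Consecutive lengths.} From the long cycle $C$ we obtain cycles of $(\tfrac{1}{64}-\varepsilon)k^2\log k$ consecutive lengths by the chord/theta-subgraph method: within the dense subgraph one finds a family of internally disjoint paths joining two fixed vertices of $C$ whose lengths form an interval of size $\Omega(|C|)$ --- here one again uses that in a triangle-free graph of large minimum degree the attachments available at any vertex are spread out, so path lengths can be adjusted one unit at a time --- and then every pair of these paths closes into a cycle, realizing all lengths in an interval of size about $|C|/16$, which is where the constant $\tfrac{1}{64}=\tfrac{1}{4}\cdot\tfrac{1}{16}$ comes from. A short parity argument then passes from even lengths to all residues; this is precisely why the reduction retains non-bipartiteness, since a bipartite dense subgraph would give only consecutive even lengths. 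Propagating the constants through the three steps yields the stated $\tfrac{1}{4}$ and $\tfrac{1}{64}$.
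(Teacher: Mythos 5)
Your proposal takes a substantially different route from the paper, and several of its key steps do not hold up.

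\textbf{The degeneracy-to-coloring step is not a known result.} You invoke the claim that a triangle-free $(d-1)$-degenerate graph has chromatic number $O(d/\log d)$. The Johansson--Molloy type bounds of this quality require bounded \emph{maximum degree}, not bounded degeneracy; for degeneracy the only general bound available is the greedy $\chi\le d$. Without this claim you cannot conclude that a $k$-chromatic triangle-free graph contains a subgraph of minimum degree $\Theta(k\log k)$, and the whole ``extract dense pieces'' framework collapses at the first step.

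\textbf{The iteration is not controlled.} Even granting a dense subgraph $H$ of minimum degree $d$, removing it only guarantees $\chi(G-H)\ge \chi(G)-\chi(H)$, and $\chi(H)$ for a triangle-free graph of minimum degree $d\approx k\log k$ can be as large as $\Theta\bigl(\sqrt{d/\log d}\bigr)=\Theta(\sqrt{k})$ (if $H$ is extremal in Kim's sense) or as small as $2$ (if $H$ is bipartite) --- in neither case is the drop ``bounded.'' So you do not get $\Omega(k)$ disjoint dense pieces; the number of iterations is uncontrolled. Likewise, producing \emph{one} cycle that visits $\Omega(k)$ of these pieces from $2$-connectivity alone via a P\'osa/ear-decomposition argument is precisely the hard part, and the sketch gives no mechanism to enforce it; a single $2$-cut separating most $H_i$'s from the rest already defeats the plan.

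The paper avoids all of this. It never builds the long cycle out of dense pieces at all. Instead it proves a clean reduction (Lemma~\ref{le1}): from a $k$-chromatic $G$ one extracts a $3$-connected $G^*$ with $G^*-e^*\subset G$ and $\chi(G^*-e^*)\ge k-1$, and then proves Theorem~\ref{main1} by \emph{induction on $k$}: take the longest cycle $C$ in $G^*-e^*$, a longest cycle $C'$ in the part $G_2=G^*-G[C]-e^*$, and use Menger's theorem (via the $3$-connectivity, plus Lemma~\ref{le0} to control $\chi(G_2)$) to connect $C$ and $C'$ by many disjoint paths, producing a strictly longer cycle unless $|C|$ was already near the target. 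The quantity $n_{\mathcal P}(k)$ enters not through local density but through Shearer's independence bound fed into the Jensen--Toft coloring inequality (Lemmas~\ref{Sh} and~\ref{JT}), which directly gives $|V(G)|\ge (\tfrac14-\delta)k^2\log k$. The consecutive-lengths statement is obtained not by a theta-graph/chord count inside a dense block, but by two rounds of BFS layering (Lemma~\ref{odd}) reducing to a $(k/4)$-chromatic graph and invoking Verstra\"ete's path-lengths lemma (Lemma~\ref{jacques}); the factor $\tfrac{1}{64}=\tfrac14\cdot\tfrac1{16}$ comes from $f(k/4)$, i.e.\ from the $k\mapsto k/4$ reduction in chromatic number squared, not from a ``$|C|/16$'' interval as you posit. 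In short, the constants you report match, but the mechanism you describe does not, and the concrete steps --- the coloring bound from degeneracy, the bounded-drop iteration, and the $2$-connectivity splicing --- each contain a genuine gap.
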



Kim~\cite{K} was the first to construct a triangle-free graph with chromatic number $k$ and $\Theta(k^2 \log k)$ vertices.
Bohman and Keevash~\cite{BK} and Fiz Pontiveros, Griffiths and Morris~\cite{FGM} independently constructed a $k$-chromatic triangle-free graph with at most $(4 + o(1))k^2 \log k$ vertices as $k \rightarrow \infty$, refining the earlier construction of Kim~\cite{K}.
These constructions show that the bound in Theorem \ref{main} is tight up to a constant factor.

\subsection{Monotone Properties}

Theorem \ref{main} is a special case of a more general theorem on monotone properties. A graph property is called
{\em monotone} if it holds for all subgraphs of a graph which has this property, i.e., is preserved under deletion of edges and vertices.
Throughout this section, let $n_{\mathcal{P}}(k)$ denote the smallest possible order of a $k$-chromatic graph in a monotone property $\mathcal{P}$.

\begin{definition}
Let $\alpha \geq 1$ and let $f : [3,\infty) \rightarrow \mathbb R^+$. Then $f$ is $\alpha$-bounded
if $f$ is non-decreasing and whenever $y \geq x \geq 3$, $y^{\alpha} f(x) \geq x^{\alpha} f(y)$.
\end{definition}

For instance, any polynomial with positive coefficients is $\alpha$-bounded for some $\alpha \geq 1$. We stress that an $\alpha$-bounded
function is required to be a non-decreasing positive real-valued function with domain $[3,\infty)$.


\begin{theorem} \label{main1}
For all $\varepsilon > 0$ and $\alpha,m \geq 1$, there exists $k_1 = k_1(\varepsilon,\alpha,m)$ such that the following holds.
If $\mathcal{P}$ is a monotone property of graphs with $n_{\mathcal{P}}(k) \geq f(k)$ for $k \geq m$ and some $\alpha$-bounded function $f$,
then for $k \geq k_1$, every $k$-chromatic graph $G \in \mathcal{P}$ contains $(i)$ a cycle of length at least $(1 - \varepsilon)f(k)$ and $(ii)$ cycles of at least $(1 - \varepsilon)f(\frac{k}{4})$ consecutive lengths.
\end{theorem}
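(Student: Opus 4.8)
The plan is to pass to a structured subgraph and then let the hypothesis $n_{\mathcal P}(k)\ge f(k)$ supply the numerical target for a cycle‑building argument. Given a $k$-chromatic $G\in\mathcal P$, I would first replace $G$ by a $k$-critical subgraph $H$. Monotonicity of $\mathcal P$ gives $H\in\mathcal P$, so $|V(H)|\ge n_{\mathcal P}(k)\ge f(k)$, and $H$ is $2$-connected (as $k\ge 3$) with minimum degree at least $k-1$. The reason $\alpha$-boundedness is in the hypothesis is that it makes chromatic number cheap to lose: if $H'\subseteq H$ has $\chi(H')=k'$ with $k\ge k'\ge 3$, then $|V(H')|\ge f(k')\ge (k'/k)^{\alpha}f(k)$, so a subgraph retaining a $(1-\varepsilon')$-fraction of the chromatic number still has at least $(1-\varepsilon)f(k)$ vertices provided $\varepsilon'=\varepsilon'(\varepsilon,\alpha)$ is chosen small, and a subgraph with $\chi\ge k/4$ has at least $4^{-\alpha}f(k)\ge(1-\varepsilon)f(\tfrac{k}{4})$ vertices. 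Hence (i) and (ii) both reduce to producing, inside $H$, a cycle (resp.\ an interval of cycle lengths) of length comparable to the number of vertices of a suitable high‑chromatic subgraph of $H$.

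\textbf{Localization via BFS layers and the cycle argument.} To find such a subgraph I would run a breadth‑first search in $H$ from a vertex $v$, with layers $L_0,\dots,L_t$, and use that an edge of $H$ joins two vertices in the same layer or in consecutive (hence opposite‑parity) layers; colouring the even‑indexed layers from one palette and the odd‑indexed layers from a disjoint palette is therefore proper, which gives $\chi(H)\le \max_{i\ \mathrm{even}}\chi(H[L_i])+\max_{i\ \mathrm{odd}}\chi(H[L_i])$. Thus some layer $L_a$ satisfies $\chi(H[L_a])\ge k/2$, and hence $|L_a|\ge f(k/2)$ by monotonicity. Since $a\ge 1$, the set $S=L_0\cup\dots\cup L_{a-1}$ is connected and dominates $L_a$, so contracting $S$ to a single apex vertex joined to all of $L_a$, and then expanding any cycle through the apex along a path inside $S$, reduces the problem to threading a long path (and, for part (ii), a long path with many well‑spaced chords) through the dense set $L_a$. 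From here I would iterate the localization inside $H[L_a]$ — re‑rooting, giving up only a bounded fraction of the chromatic number at each stage, which is affordable by $\alpha$-boundedness — and combine it with a rotation–extension (Pósa‑type) analysis to force a cycle whose length is a $(1-o(1))$-fraction of the vertex count of the final dense piece, which by monotonicity is at least $(1-o(1))f$ of its chromatic number. For part (ii) the chords of the resulting long path then yield cycles of every length in an interval of comparable size, and one extra odd detour corrects the parity so the interval is one of consecutive integers; the factor $4$ in $f(\tfrac{k}{4})$ tracks exactly the cost of the parity colouring of the layers together with the path‑to‑cycle conversion.

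\textbf{Main obstacle.} The heart of the matter, and the step I expect to be hardest, is precisely the cycle‑building lemma: one must convert ``large chromatic number together with the structure coming from $H$ being critical and from $\mathcal P$'' into a near‑spanning cycle of the relevant dense piece, rather than merely a cycle of length about $k$, which is all that minimum degree (or density, via Erd\H os--Gallai) gives on its own; and one must do so while retaining enough connectivity to run the chord argument for consecutive lengths. Arranging the bookkeeping so that only the advertised constants leak — no loss in part (i) and exactly a factor $4$ in part (ii) — is the delicate part.
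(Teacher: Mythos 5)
Your high-level reduction is sound: pass to a $k$-critical subgraph, use $\alpha$-boundedness to make chromatic-number loss affordable, use BFS layering to handle part (ii) (this is exactly the paper's Lemma~\ref{odd}, and your accounting of where the factor $4$ comes from is correct). But you have correctly identified the gap yourself, and it is fatal as written: you do not supply the argument that produces a cycle of length $\approx f(k)$ rather than $\approx k$. You propose to do this via ``rotation--extension (P\'osa-type) analysis,'' but P\'osa rotations require minimum degree that is a constant fraction of the order of the relevant subgraph, and here the minimum degree in a $k$-critical subgraph is only $k-1$, which is tiny compared with $f(k)$ (for triangle-free graphs, $f(k)\sim \tfrac14 k^2\log k$). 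So the P\'osa route does not close, and no amount of re-rooting or apex contraction repairs that discrepancy.

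The paper's actual mechanism is entirely different and is the real content of the theorem. It first replaces $f$ by the $(\alpha+1)$-bounded function $g(x)=xf(x)/(x+f(x_0))$, chosen so that $g(x)\le x$ below a threshold $x_0$; this makes the base case of an induction on $k$ trivial (a $k$-critical graph has a cycle of length $\ge k$). For the inductive step it passes to a nearly $3$-connected subgraph $G^*$ with $\chi(G^*-e^*)\ge k-1$ (Lemma~\ref{le1}), takes a longest cycle $C$ in $G^*-e^*$ (long by induction), looks at a longest cycle $C'$ in $G^*-V(C)-e^*$, and uses Menger's theorem plus $3$-connectivity to stitch $C$ and $C'$ into a longer cycle via the two inequalities $|C^*|\ge \tfrac{\ell-1}{\ell}|C|+\tfrac12|C'|$ and $|C^{**}|\ge \tfrac23(|C|+|C'|)$. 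The key structural input that controls $|C'|$ is Lemma~\ref{le0}: removing a small vertex cut from a $k$-critical graph leaves components of high chromatic number, so either $C$ already captures most of the chromatic number (Case~1), or the rest does (Case~2), or there are many Menger paths (Case~3). None of this appears in your sketch, and without something playing this bootstrapping role the proof does not go through.
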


If $n_{\mathcal{P}}(k)$ itself is $\alpha$-bounded for some $\alpha$, then we obtain from Theorem \ref{main1} a tight result that a $k$-chromatic graph in $\mathcal{P}$ contains a cycle of length asymptotic to $n_{\mathcal{P}}(k)$ as $k \rightarrow \infty$. But proving that $n_{\mathcal{P}}(k)$ is $\alpha$-bounded for some $\alpha$ is probably difficult
for many properties, and
in the case $\mathcal{P}$ is the property of $F$-free graphs, perhaps is as difficult as obtaining asymptotic formulas for certain Ramsey numbers.
Even in the case of the property of triangle-free graphs, we have seen $n_{\mathcal{P}}(k)$ is known only up to a constant factor. We remark
that in Theorems \ref{main} and \ref{main1}, we have not attempted to optimize the quantities $k_0(\varepsilon)$ and $k_1(\varepsilon,\alpha,m)$.

\subsection{An application: $K_r$-free graphs}

As an example of an application of Theorem \ref{main1}, we consider the property $\mathcal{P}$ of $K_r$-free graphs. A lower bound for the quantity $n_{\mathcal{P}}(k)$ can be obtained by combining upper bounds for Ramsey numbers together with a lemma on colorings obtained
by removing maximum independent sets -- see Section \ref{main2proof}. In particular, we shall obtain
the following from Theorem \ref{main1}:

\begin{theorem} \label{main2}
If $G$ is a $k$-chromatic $K_{r+1}$-free graph, where $r,k \geq 3$, then $G$ contains
a cycle of length $\Omega(k^{\frac{r}{r - 1}})$, and cycles of
$\Omega(k^{\frac{r}{r - 1}})$ consecutive lengths as $k \rightarrow \infty$.
\end{theorem}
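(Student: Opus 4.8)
The plan is to deduce this directly from Theorem \ref{main1}, so the whole task reduces to producing an $\alpha$-bounded lower bound $f(k)$ for $n_{\mathcal{P}}(k)$, the minimum order of a $k$-chromatic $K_{r+1}$-free graph, with $f(k) = \Omega(k^{r/(r-1)})$. The natural exponent to aim for is $\alpha = \frac{r}{r-1}$, since then $f(k) = c\,k^{r/(r-1)}$ is manifestly non-decreasing and satisfies $y^\alpha f(x) = c\,y^\alpha x^\alpha = x^\alpha f(y)$ with equality, so it is $\alpha$-bounded for the constant $c$ we extract below. Feeding this $f$ into Theorem \ref{main1} with $m = 3$ then yields, for $k$ large, a cycle of length at least $(1-\varepsilon)f(k) = \Omega(k^{r/(r-1)})$ and cycles of at least $(1-\varepsilon)f(k/4) = \Omega(k^{r/(r-1)})$ consecutive lengths; the constant $(1/4)^{r/(r-1)}$ lost in the second part is harmless since it is absorbed into the $\Omega$. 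The only remaining point is to handle small $k$ (below $k_1$) separately, which is trivial: every $k$-chromatic graph contains a cycle of length at least $k$, and a bounded number of exceptional values of $k$ only changes the implied constant.

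The heart of the matter is therefore the claimed bound $n_{\mathcal{P}}(k) = \Omega(k^{r/(r-1)})$ for $K_{r+1}$-free graphs. First I would establish the coloring lemma alluded to in the excerpt: if $G$ is an $n$-vertex graph with independence number $\alpha(G) \le a$, then $G$ is $\lceil n/a \rceil$-colorable — more usefully, repeatedly deleting a maximum independent set shows $\chi(G) \le n/\alpha(G) \cdot (1 + \ln \alpha(G))$ type bounds, but for our purposes the crude greedy bound $\chi(G) \le n / \alpha(G) + 1$, or rather an averaging version, suffices. The point is that a $k$-chromatic graph on $n$ vertices must have $\alpha(G) \le n/(k-1)$ (else delete a large independent set and induct on the chromatic number). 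Second, I invoke the Ramsey bound for $K_{r+1}$-free graphs: a $K_{r+1}$-free graph on $n$ vertices has independence number $\alpha(G) = \Omega\!\big(n^{1/r}(\log n)^{?}\big)$ — in fact the Ramsey-type lower bound $R(r+1, t) = O(t^r/(\log t)^{\,r-1})$ of Ajtai--Koml\'os--Szemer\'edi and its refinements gives $\alpha(G) = \Omega\big((n \log n)^{1/r}\big)$ for $K_{r+1}$-free $G$ (for the cleanest statement one may drop the logarithmic factor and just use $\alpha(G) = \Omega(n^{1/r})$). Combining $k - 1 \le n/\alpha(G)$ with $\alpha(G) \ge c\, n^{1/r}$ gives $k \le n^{1 - 1/r}/c'$, i.e. $n \ge (c' k)^{r/(r-1)}$, which is exactly the $\alpha$-bounded lower bound needed with $\alpha = r/(r-1)$.

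The main obstacle is not deep but bookkeeping: one must be careful that the function $f$ handed to Theorem \ref{main1} genuinely satisfies the definition of $\alpha$-bounded on the full domain $[3,\infty)$ (non-decreasing, positive, real-valued, and the ratio condition), and that the threshold $m$ in the hypothesis $n_{\mathcal{P}}(k) \ge f(k)$ for $k \ge m$ is chosen consistently with the range where the Ramsey bound is valid — the Ramsey lower bounds only kick in for $t$ (equivalently $k$) past an absolute constant, so one takes $m$ to be that constant. Everything else is a matter of choosing the constant $c$ in $f(k) = c k^{r/(r-1)}$ small enough that the inequality $n_{\mathcal{P}}(k) \ge f(k)$ holds for all $k \ge m$, then applying Theorem \ref{main1} verbatim. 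A cosmetic subtlety worth flagging in the write-up: since $r$ is allowed to grow with $k$ in some applications but here is regarded as fixed, the constants in $\Omega(\cdot)$ depend on $r$, which is consistent with the statement of Theorem \ref{main2}.
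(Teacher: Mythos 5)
Your overall strategy — bound $n_{\mathcal{P}}(k)$ below using a Ramsey-type independence-number bound and feed the resulting $\alpha$-bounded function into Theorem~\ref{main1} — is exactly what the paper does. But the specific ``coloring lemma'' you rely on to convert the independence bound into a lower bound on $n$ is false as stated, and that is where the argument breaks. You claim that a $k$-chromatic graph on $n$ vertices has $\alpha(G)\le n/(k-1)$, equivalently $\chi(G)\le n/\alpha(G)+1$. This reverses the only trivial inequality in sight: since every color class is independent, what holds is $\chi(G)\ge n/\alpha(G)$, which is useless in the direction you need. The Gr\"otzsch graph already refutes your claim ($n=11$, $\alpha=5$, $\chi=4>11/5+1$), and a $K_k$ together with many isolated vertices refutes $\alpha(G)\le n/(k-1)$ decisively. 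Your parenthetical ``delete a large independent set and induct'' doesn't rescue this, because removing a maximum independent set does not control $\alpha$ of the remainder without further information.

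The way to make this step correct is to exploit that the $K_{r+1}$-free property is monotone, so the Ramsey bound $\alpha(H)\ge c\,|V(H)|^{1/r}$ holds for every subgraph $H$ of $G$, not just $G$ itself. Then one can iterate greedy deletion of maximum independent sets, and the right quantitative form of this is the Jensen--Toft inequality $\chi(G)\le s+\int_s^n \psi(x)^{-1}\,dx$ for any hereditary lower bound $\alpha\ge\psi(|V|)$ (this is precisely the paper's Lemma~\ref{JT}, applied with $\psi(x)$ roughly $x^{1/r}$). With that lemma in place, you get $\chi(G)=O_r(n^{1-1/r})$, hence $n\ge c_r k^{r/(r-1)}$, and the rest of your write-up (the $\alpha$-boundedness of $c\,k^{r/(r-1)}$ with $\alpha=r/(r-1)$, the choice of $m$, the absorption of $(1/4)^{r/(r-1)}$, the treatment of small $k$) is correct and matches the paper. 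In short: right plan, right conclusion, but the conversion from ``large hereditary independence number'' to ``few vertices force few colors'' needs the Jensen--Toft-style iterated argument rather than the false one-shot bound $\chi\le n/\alpha+1$.
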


Theorem \ref{main2} is derived from upper bounds on the Ramsey numbers $r(K_r,K_t)$ combined with Theorem \ref{main1}. In general,
if for a graph $F$ one has $r(F,K_t) = O(t^{a} (\log t)^{-b})$ for some $a > 1$ and $b > 0$,
then any $k$-chromatic $F$-free graph has cycles of
\[ \Omega\bigl(k^{\frac{a}{a - 1}}(\log k)^{\frac{b}{a - 1}}\bigr)\]
consecutive lengths. We omit the technical details, since the ideas of the proof are identical to those used for Theorem \ref{main2}. These technical details are presented in the proof of Theorem \ref{main} in Section \ref{mainproof}, where $F$ is a triangle (in which case $a = 2$ and $b = 1$), and the same ideas can be used to slightly improve Theorem \ref{main2} by logarithmic factors using better bounds on $r(K_s,K_t)$ from results of
Ajtai, Koml\'os and Szemer\'edi \cite{AKS}. Similarly, if $C_\ell$ denotes the cycle of length
$\ell$, then it is known that $r(C_{2s + 1},K_t) = O(t^{1 + 1/s}(\log t)^{-1/s})$ -- see~\cite{Sudakov}. This in turn provides
cycles of $\Omega(k^{s+1} \log k)$ consecutive lengths in any $C_{2s+1}$-free $k$-chromatic graph, extending Theorem \ref{main}.

\bigskip

{\bf Notation and terminology.} For a graph $G$, let $c(G)$ denote the length of a longest cycle in $G$ and $\chi(G)$ the
chromatic number of $G$.
If $F \subset G$ and $S \subset V(G)$, let $G[F]$ and $G[S]$ respectively denote the subgraphs of $G$ induced by $V(F)$ and $S$.
A chord of a cycle $C$ in a graph is an edge of the graph joining two non-adjacent vertices on the cycle.
All logarithms in this paper are with the natural base.

\bigskip

{\bf Organization.} In the next section, we present the lemmas which will be used to prove Theorem \ref{main1}. Then in Section \ref{mainproof},
we apply Theorem \ref{main1} to obtain the proof of Theorem \ref{main}. Theorem \ref{main2} is proved in Section \ref{main2proof}.

\section{Lemmas}

\subsection{Vertex cuts in $k$-critical graphs}

When a small vertex cut is removed from a $k$-critical graph,
all the resulting components still have relatively high chromatic number:

\begin{lemma}\label{le0}
Let $G$ be a $k$-critical graph and let $S$ be a vertex cut of $G$. Then for any component $H$ of $G - S$,
$\chi(H) \geq k - |S|$.
\end{lemma}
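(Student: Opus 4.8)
The plan is to use the defining property of $k$-critical graphs: every edge lies in a proper $(k-1)$-coloring that is spoiled, i.e., $G$ is $k$-chromatic but $G - e$ is $(k-1)$-colorable for every edge $e$. Suppose for contradiction that some component $H$ of $G - S$ satisfies $\chi(H) \leq k - |S| - 1$. The idea is to build a proper $(k-1)$-coloring of all of $G$, contradicting $\chi(G) = k$. First I would color $H$ with colors from a palette of size $k - |S| - 1$, say colors $\{1, \dots, k - |S| - 1\}$. Then I would handle the rest of the graph.

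The key step is to color $G - V(H)$ (which contains $S$) together with the interface to $H$. Since $G$ is $k$-critical, for any edge $e$ inside $H$ (assuming $H$ has an edge; if $H$ is a single vertex the argument is even easier), the graph $G - e$ has a proper $(k-1)$-coloring $\phi$. Actually, the cleaner approach: take any edge $e \in E(H)$ and a proper $(k-1)$-coloring $\phi$ of $G - e$. Restrict $\phi$ to $G - V(H)$; this uses at most $k - 1$ colors. Now I want to recolor $V(H)$ so that it is consistent with $\phi$ on $S$. The vertices of $S$ have been assigned colors by $\phi$; these occupy at most $|S|$ colors. The neighbors of $V(H)$ outside $H$ lie entirely in $S$ (since $H$ is a component of $G - S$), so when extending a coloring of $H$ we only need to avoid, on each vertex of $H$ adjacent to $S$, the colors $\phi$ assigns to its neighbors in $S$. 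Permuting colors in the $(k - |S| - 1)$-coloring of $H$ to live in the complement of $\phi(S)$: since $|\{1,\dots,k-1\} \setminus \phi(S)| \geq k - 1 - |S| \geq k - |S| - 1$, there is room to embed the color classes of $H$ into colors not used on $S$. This yields a proper $(k-1)$-coloring of $G$, the contradiction.

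A subtlety to be careful about is that we need the coloring of $G - V(H)$ to genuinely be proper on all of $G - V(H)$, including the edge $e$ if $e$ happened to be chosen outside $H$ — so I would explicitly choose $e$ to be an edge of $H$ (valid if $H$ has at least one edge; if $H$ is edgeless it is a single vertex and can be colored with one color avoiding its $\leq |S|$ neighbors' colors, which is fine as long as $k - 1 > |S|$, i.e., $\chi(H) = 1 \le k - |S|$ trivially, handled separately). With $e \in E(H)$, the coloring $\phi$ of $G - e$ restricted to $V(G) \setminus V(H)$ is automatically a proper coloring of $G[V(G)\setminus V(H)]$ since we deleted only the edge $e$ and all of $V(H)$.

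The main obstacle — really the only delicate point — is bookkeeping the color palettes so that the recoloring of $H$ avoids conflicts across $S$ while staying within $k - 1$ total colors; this is where the hypothesis $\chi(H) \le k - |S| - 1$ is used quantitatively, since we need $k - |S| - 1$ fresh colors disjoint from the $\le |S|$ colors appearing on $S$, and $(k - |S| - 1) + |S| = k - 1$ exactly fits. Everything else is the routine observation that $N_G(V(H)) \subseteq S$ because $H$ is a connected component of $G - S$.
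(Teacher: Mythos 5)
Your proof is correct and follows essentially the same approach as the paper: obtain a $(k-1)$-coloring of $G$ minus $H$ via criticality, observe that $S$ occupies at most $|S|$ of those colors, and color $H$ with $\chi(H) \le k-1-|S|$ of the remaining colors (valid since $N_G(V(H)) \subseteq S$). The only difference is cosmetic: the paper takes a $(k-1)$-coloring of $G - V(H)$ directly (criticality implies every proper subgraph is $(k-1)$-chromatic), whereas you route through deleting an edge $e \in E(H)$ and then restricting, which forces you to treat the edgeless-$H$ case separately for no real gain.
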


\begin{proof}
If $|S|+\chi(H)\leq k-1$, then a $(k-1)$-coloring of $G-H$ (existing by the criticality of $G$) can be extended to
a $(k-1)$-coloring of $G$.
\end{proof}

\subsection{Nearly 3-connected subgraphs}

Our second lemma finds an almost 3-connected subgraph with high chromatic number in a graph with
high chromatic number.

\smallskip

\begin{lemma}\label{le1}
Let $k \geq 4$. For every 
  $k$-chromatic graph $G$, there is a graph $G^*$ and
an edge $e^*\in E(G^*)$ such that
\vspace{-0.1in}
\begin{center}
\begin{tabular}{lp{5in}}
{\rm (a)} & $G^*-e^* \subset G$ and $\chi(G^*-e^*) \geq k - 1$.\\
{\rm (b)} & $G^*$ is $3$-connected. \\
{\rm (c)} & $c(G^*)\leq c(G)$.\\
\end{tabular}
\end{center}
\end{lemma}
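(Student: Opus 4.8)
The plan is to build $G^*$ by first passing to a $k$-critical subgraph, then repeatedly "absorbing" small vertex cuts until the graph becomes nearly $3$-connected, all the while keeping the circumference under control. First I would replace $G$ by a $k$-critical subgraph $G_0 \subseteq G$; then $c(G_0) \le c(G)$, and it suffices to prove the lemma for $G_0$, so I may assume $G$ itself is $k$-critical. A $k$-critical graph with $k \ge 4$ has minimum degree at least $k-1 \ge 3$, so it certainly contains a cycle, hence is $2$-connected (a $k$-critical graph is always $(k-1)$-edge-connected and in particular $2$-connected for $k \ge 3$).

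Now I would induct on the number of vertices. If $G$ is already $3$-connected, take $G^* = G + e^*$ where $e^*$ is any non-edge (such a non-edge exists since a triangle-free, or even just non-complete, $k$-critical graph is not complete; if $G$ were complete $K_k$ one can argue separately, but a complete graph is trivially handled by taking $G^* = G$ with $e^*$ a doubled/parallel edge or by noting $K_k$ itself is $3$-connected and has a cycle of length $k$ — I would just take $G^*=G$ and let $e^*$ be an arbitrary edge, adjusting condition (a) to $G^* - e^* \subseteq G$). Otherwise $G$ has a $2$-cut $\{u,v\}$. Let the components of $G - \{u,v\}$ be $H_1, \dots, H_t$, and for each $i$ let $B_i = G[V(H_i) \cup \{u,v\}]$ be the corresponding "bridge", augmented with the edge $uv$ if it is not already present. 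By Lemma~\ref{le0}, $\chi(G[V(H_i)\cup\{u,v\}]) \ge k - 2$, and since $\{u,v\}$ has at most $2$ colors in any coloring, in fact some bridge $B_i$ (with the edge $uv$ added) satisfies $\chi(B_i) \ge k-1$: indeed, if every $B_i$ were $(k-1)$-colorable then, because they all share only $\{u,v\}$ and $uv \in E(B_i)$ forces $u,v$ to get distinct colors, one could permute colors on each piece to agree on $\{u,v\}$ and glue, giving a $(k-1)$-coloring of $G + uv \supseteq G$, contradicting $\chi(G) = k$. Fix such a bridge $B := B_i$. Any cycle of $B$ either avoids the edge $uv$, in which case it is a cycle of $G$, or uses $uv$, in which case replacing $uv$ by a $u$–$v$ path through some other bridge $B_j$ (which exists and is connected) yields a cycle of $G$ at least as long; hence $c(B) \le c(G)$.

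At this point $B$ has $\chi(B) \ge k - 1$, $c(B) \le c(G)$, and fewer vertices than $G$ (it omits all other bridges, which are nonempty). If $k - 1 \ge 4$, I would like to recurse; the cleanest bookkeeping is to carry along the edge $e^* = uv$ as a permanently "allowed" extra edge and recurse on $B - e^*$ if $uv \notin E(G)$, or simply on $B$ if it was already $3$-connected. More precisely: I would prove by induction the statement "for every graph $H$ with $\chi(H) \ge k-1$ and every edge set the recursion has accumulated, there is a $3$-connected $H^*$ with $H^* - e^* \subseteq H$, $\chi(H^* - e^*) \ge k - 2$, and $c(H^*) \le c(H)$" — but it is tidier to keep the target chromatic bound at $k-1$ throughout by observing we only ever need to lose one unit total: we apply this reduction to a $k$-critical graph once to extract the relevant bridge, then take a $(k-1)$-critical subgraph of that bridge and repeat, and a $3$-connected graph is reached after finitely many steps since the vertex count strictly decreases. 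The edge we add at the very end to make the final bridge $3$-connected (or the last cut edge $uv$ we absorbed) is $e^*$; conditions (a), (b), (c) follow by construction.

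The main obstacle is the bookkeeping in the inductive step: making sure that when we pass to a bridge and add the virtual edge $uv$, we don't accumulate more than one extra edge, and that the chromatic number drops by only $1$ in total rather than once per cut. The key insight that prevents the loss from compounding is that after absorbing a $2$-cut into a bridge $B$ with the virtual edge $uv$ added, $B$ has chromatic number at least $k-1$ with the virtual edge included; we then take a $(k-1)$-critical subgraph $B'$ of $B$ and continue working inside $B'$ — $B'$ either still contains $uv$ (fine, it stays a genuine edge and no new virtual edge is needed for a while) or it does not (in which case $B'$ is an honest subgraph of $G$ with $\chi \ge k-1$ and we are back to the original setup with one fewer chromatic unit to spare, but we have not yet "spent" our single allowed extra edge). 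Careful formulation shows a single virtual edge always suffices, which is exactly what statement (a) asserts; verifying this invariant rigorously is where the real work lies.
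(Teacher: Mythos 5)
Your proposal opens along the same lines as the paper — pass to a $k$-critical subgraph (hence $2$-connected for $k\ge 3$), observe that if it is already $3$-connected there is nothing to do, and otherwise take a $2$-cut $\{u,v\}$, form the bridges $B_i$ with the virtual edge $uv$ added, and run a gluing argument to find a bridge of high chromatic number. That gluing argument is sound (it in fact gives $\chi(B_i)\ge k$ for some $i$, not merely $k-1$), and the circumference bound $c(B_i)\le c(G)$ by rerouting $uv$ through another bridge is also correct. Where you and the paper part ways is in how $3$-connectedness is obtained, and that is exactly where your argument has a genuine gap.

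You propose to \emph{recurse} on the chosen bridge until a $3$-connected graph appears, and you explicitly flag the problem: the recursion can accumulate more than one virtual edge, or lose a unit of chromatic number per iteration. You then assert that ``careful formulation shows a single virtual edge always suffices,'' but you do not exhibit that formulation, and the obvious attempts fail. Concretely: after the first step, you work inside $B_1+u_1v_1$; at the next $2$-cut $\{u_2,v_2\}$ of $B_1+u_1v_1$, the bridge $B_2+u_2v_2$ that the gluing argument hands you may well contain the old virtual edge $u_1v_1$ (this happens whenever $u_1,v_1$ both land in the same piece), in which case $B_2$ is not a subgraph of $G$ and you have two virtual edges. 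Your suggested fix of passing to a $(k-1)$-critical subgraph $B'$ of $B_1$ makes matters worse: $B'$ is only $(k-1)$-critical, so the downstream conclusion degrades to $\chi(G^*-e^*)\ge k-2$. Passing instead to a $k$-critical subgraph of $B_1+u_1v_1$ gets closer, but then the inductive conclusion hands you an arbitrary edge $e'$, and there is no guarantee $e'=u_1v_1$; if $u_1v_1$ survives inside $H^*-e'$ you are again stuck with a non-subgraph of $G$. So the invariant you need is not established.

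The paper sidesteps the entire recursion with a single extremal choice: among all $2$-cuts $S$ of $G'$ and components $F$ of $G'-S$, take the pair $(S,F)$ with $|V(F)|$ minimum, and set $G^*=G'[V(F)\cup S]+uv$. Then \emph{any} $2$-cut $S'$ of $G^*$ would (using $uv\in E(G^*)$ to keep $u,v$ together) produce a component of $G'-S'$ strictly inside $F$, contradicting minimality. This yields $3$-connectedness in one step with exactly one virtual edge, and the chromatic bound $\chi(G^*-e^*)\ge k-1$ is obtained directly by extending a $(k-1)$-coloring of $G'-V(F)$ (which exists by criticality) and showing a $(k-2)$-coloring of $G^*-e^*$ would contradict $\chi(G')=k$. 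I would encourage you to replace your recursion by this minimality argument; the rest of your write-up (criticality, gluing on bridges, rerouting the virtual edge for the circumference bound) then goes through essentially unchanged.

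Two smaller points: a $k$-critical graph for $k\ge 3$ is $2$-connected because it has no cut vertex (color the pieces separately and permute at the cut vertex) — the fact that it is $(k-1)$-edge-connected is true but does not by itself give $2$-vertex-connectivity; and in the $3$-connected base case you should simply take $G^*=G'$ with $e^*$ any existing edge of $G'$, rather than adding a non-edge, so that $G^*-e^*\subset G$ is immediate.
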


\begin{proof} Let $G'$ be a $k$-critical subgraph of $G$. Then $G'$ is 2-connected.
If $G'$ is $3$-connected, then the lemma holds for $G^*=G'$ with any $e \in E(G')$ as $e^*$. So suppose $G'$ is not $3$-connected.
Among all separating sets $S$ in $G'$ of size $2$ and components $F$ of $G'-S$, choose a pair $(S,F)$ with the minimum $|V(F)|$. If $S = \{u,v\}$, then
we let $G^*$ be induced by $V(F) \cup S$ plus the edge $e^* = uv$. 
   We claim $\chi(G^*-e^*) \geq k - 1$.
Since $G'$ is $k$-critical, there is a $(k - 1)$-coloring $\phi : V(G') \backslash V(F) \rightarrow \{1,2,\dots,k-1\}$ of $G' - V(F)$ and we
may assume $\phi(u) =k- 1$ and by renaming colors $\phi(v) \in \{1,k-1\}$. Suppose for a contradiction that there is a coloring
$\phi^* : V(G^*-e^*) \rightarrow \{1,2,\dots,k-2\}$ of $G^*-e^*$. If $\phi(v)=k-1$, then we let $\phi'(x)=\phi(x)$ if $x\in V(G')-V(F)$ and
$\phi'(x)=\phi^*(x)$ if $x\in V(F)$, and this $\phi'$ is a proper $(k-1)$-coloring of $G'$, a contradiction. Otherwise $\phi(v)=1$.
Then  we change
the names of colors in $\phi^*$ so that $\phi^*(v)=1$ and again let $\phi'(x)=\phi(x)$ if $x\in V(G')-V(F)$ and
$\phi'(x)=\phi^*(x)$ if $x\in V(F)$. Again
we have a proper $(k - 1)$-coloring of $G'$. This  contradiction proves  (a).

To prove (b), if $G^*$ has a separating set $S'$ with $|S'|=2$, then, since $uv \in E(G^*)$, it is also a separating set in $G$
and at least one component of $G' - S'$ is strictly contained in $F$. This contradicts the choice of $F$ and $S$.

For (c), let $C$ be a cycle in $G^*$ with $|C|=c(G^*)$. If $e^*\notin E(C)$, then $C$ is also a cycle in $G$, and thus
$c(G) \geq |C| = c(G^*)$. If $e^*\in E(C)$ and $G^* \neq G'$, then we obtain a longer cycle $C$ in $G'$ by replacing $e^*$
with a $uv$-path in $G' - V(F)$ -- note such a path exists since $G'$ is $2$-connected. This proves (c).
\end{proof}

\subsection{Finding cycles of consecutive lengths}

In this subsection we show how to go from longest cycles in graphs to cycles of many consecutive lengths.
We will need the following result from~\cite{V},
which is also implicit in the paper of Bondy and Simonovits~\cite{BS}:

\begin{lemma}[Lemma 2 in~\cite{V}] \label{jacques}
Let $H$
be a graph comprising a cycle with a chord. Let
$(A,B))$  be a nontrivial partition of $V(H)$.
Then $H$ contains $A,B$-paths of every positive length less than
$|H|$, unless $H$ is
bipartite with bipartition $(A,B)$.
\end{lemma}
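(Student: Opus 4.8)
\emph{The plan.} Write $n=|H|$ and present $H$ as a cycle $C=w_0w_1\cdots w_{n-1}w_0$ together with a chord $e=w_0w_p$, where we may assume $2\le p\le n/2$. It suffices to show: for every $\ell$ with $1\le\ell\le n-1$, either $H$ has an $A,B$-path of length $\ell$, or the partition $(A,B)$ is a proper $2$-colouring of $H$ --- and the latter, since $H$ is connected, is exactly the stated exceptional case. For $\ell=1$ the first alternative holds because $H$ is connected and $(A,B)$ is nontrivial. So I would fix $\ell\ge 2$, assume no $A,B$-path has length $\ell$, and deduce that $(A,B)$ properly $2$-colours $H$.

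\emph{Arc paths.} For each $i\in\mathbb Z_n$ the subpath $w_iw_{i+1}\cdots w_{i+\ell}$ of $C$ (indices mod $n$) has length $\ell$, so its two endpoints lie in one class; hence $w_i$ and $w_{i+\ell}$ are always in the same class, and the class of $w_i$ depends only on $i\bmod d$ where $d=\gcd(\ell,n)$. Both classes being nonempty, $d\ge 2$; let $\bar\sigma:\mathbb Z_d\to\{A,B\}$ be the induced non-constant map.

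\emph{Chord paths.} Deleting $e$ from any path of $H$ that uses $e$ leaves two disjoint subarcs of $C$, one ending at $w_0$ and one ending at $w_p$; parametrising each by a direction around $C$ and a length produces four families of ``chord paths''. A short length computation shows that among paths of the given length $\ell$ these families realise: paths with endpoints $\{w_a,w_b\}$ and $a+b\equiv p-1\pmod d$, with $a$ running over a set of size $\min(p,\ell)$ when $\ell\le n-p$ and of size $n-\ell$ when $\ell>n-p$; paths with $a+b\equiv p+1\pmod d$ over a similar set; and (only when $\ell\le n-p$) paths with $b\equiv a+(p-1)\pmod d$, with $a$ running over \emph{all} of $\mathbb Z_d$. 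Since by assumption none of these is an $A,B$-path, each family becomes an invariance of $\bar\sigma$. The third one gives $\bar\sigma(r)=\bar\sigma(r+p-1)$ for all $r$, so $\bar\sigma$ factors through $\mathbb Z_g$ with $g=\gcd(p-1,d)$; if $g=1$ this makes $\bar\sigma$ constant, a contradiction, so $g\ge 2$, and then $g\le p-1<p$, which forces the parameter sets of the first two families to cover $\mathbb Z_g$ (they have size $\min(p,\ell)\ge g$, as $g<p$ and $g\mid\ell$). Reading those invariances modulo $g$ and using $p\equiv 1\pmod g$ yields $\tau(r)=\tau(-r)$ and $\tau(r)=\tau(2-r)$ for the induced $\tau:\mathbb Z_g\to\{A,B\}$, hence $\tau(s)=\tau(s+2)$ for all $s$. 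Thus $g$ is even, $\tau$ is the parity colouring of $\mathbb Z_g$, and so the partition restricted to $C$ is the parity $2$-colouring of $C$ (and $n$ is even). Now if $p$ were even the chord $w_0w_p$ would be monochromatic; but the first family has a path of length $\ell$ for every $\ell\le n-1$, and since $d$ is even and $d\mid\ell$ the number $\ell$ is even, so that path has endpoints $w_a,w_b$ with $a+b=p+\ell-1$ odd, i.e. of opposite parity on $C$ --- an $A,B$-path of length $\ell$, a contradiction. Hence $p$ is odd, the chord is properly coloured, and $(A,B)$ properly $2$-colours $H$, as required. When $\ell>n-p$ the third family is absent, but then the first two already have parameter sets of size $n-\ell\ge d$, so one obtains $\bar\sigma(s)=\bar\sigma(s+2)$ on $\mathbb Z_d$ directly and concludes in the same way.

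\emph{Where it is hard.} The arc-path step and the concluding bipartiteness argument are routine; the substance is in the chord-path step. The hard part is to verify, for the \emph{particular} $\ell$ at hand, that chord paths of exactly length $\ell$ do exist in the claimed families, and that the residues of their endpoint pairs --- modulo $d$, and then modulo $g$ --- genuinely exhaust the cyclic group in question; it is only this ``coverage'' that lets the no-$A,B$-path hypothesis collapse the period of the partition down to $2$. The real point is that no single family (not even all the arc paths together) suffices: one must intersect the constraints from the arc paths with those from three distinct chord-path families, and in particular when the short arc of $C$ is very short the chord paths confined near it are too few, so the ones that use long stretches of $C$ are indispensable.
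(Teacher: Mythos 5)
The paper does not prove this lemma --- it is quoted verbatim from reference \cite{V} (Lemma~2 there) and used as a black box in the proof of Lemma~\ref{odd}. So there is no in-paper proof to compare your attempt against; it has to be judged on its own.

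Having checked it, your argument is correct. The structure --- (i) arc paths force the colour of $w_i$ to depend only on $i \bmod d$ with $d=\gcd(\ell,n)\ge 2$; (ii) the chord-path family with both arcs running into the long side of $C$ (fixed endpoint \emph{difference} $\equiv p-1 \pmod d$, available exactly when $\ell\le n-p$ and with full coverage of $\mathbb Z_d$ since $\ell\ge d$) collapses $\bar\sigma$ to period $g=\gcd(p-1,d)$; (iii) the two families with fixed endpoint \emph{sum} $\equiv p\pm 1 \pmod d$, whose parameter ranges are intervals of $\min(p,\ell)\ge g$ (resp.\ $n-\ell\ge d$ when $\ell>n-p$) consecutive residues, then give $\tau(r)=\tau(-r)$ and $\tau(r)=\tau(2-r)$ on $\mathbb Z_g$, hence period $2$; and (iv) the parity of $p$ is pinned down by re-examining one fixed-sum path whose endpoint indices sum \emph{exactly} to $p+\ell-1$ --- is sound. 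I verified the existence/size claims for all four chord-path families in both regimes $\ell\le n-p$ and $\ell>n-p$ (using $d\mid n-\ell$, and $\ell>n-p\Rightarrow\ell>p$ since $p\le n/2$), the divisibilities $g\mid d\mid\ell$ and $g<p$ used for coverage, and the degenerate case $g=1$ (immediate contradiction). The one thing worth flagging is presentational rather than mathematical: the ``short length computation'' sentence packs in four separate disjointness/range checks, and since (as you correctly emphasize) the whole proof hinges on those coverage facts, a careful write-up should display the four families and their index ranges explicitly rather than asserting them. The original proof in \cite{V} is organized somewhat differently (it is set up to be read together with the Bondy--Simonovits lemma it generalizes), but since this paper does not reproduce it, I will not attempt a detailed comparison.
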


\begin{lemma}\label{odd}
Let $k \geq 4$ and $\mathcal{Q}$ be a monotone class of graphs. Let $h(k,\mathcal{Q})$ denote the smallest possible length
of a longest cycle in any $k$-chromatic  graph in $\mathcal{Q}$.  Then every $4k$-chromatic
 graph in $\mathcal{Q}$ contains cycles of at least $h(k,\mathcal{Q})$ consecutive lengths.
\end{lemma}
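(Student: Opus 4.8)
The plan is to leverage Lemma~\ref{le1} to reduce to an almost-3-connected graph, then use Lemma~\ref{le0} about vertex cuts to locate a well-connected piece of high chromatic number, and finally apply Lemma~\ref{jacques} (the Bondy--Simonovits style path lemma) to convert a single long cycle into many consecutive cycle lengths.

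First I would take a $4k$-chromatic graph $G \in \mathcal{Q}$ and apply Lemma~\ref{le1} to obtain a $3$-connected graph $G^*$ with a distinguished edge $e^* = uv$ such that $G^* - e^* \subset G$, $\chi(G^* - e^*) \geq 4k - 1$, and $c(G^*) \leq c(G)$. Since $\mathcal{Q}$ is monotone, $G^* - e^*$ (and hence $G^*$ minus any structure we extract) stays in $\mathcal{Q}$. Next, using that $G^*$ is $3$-connected and has chromatic number at least $4k-1$, I want to find a subgraph $H$ of $G^*$ that is $2$-connected (so it has a cycle with a chord through any two prescribed vertices, once $|H|\geq 4$), contains the edge $e^*$ — equivalently the vertices $u,v$ — has $\chi(H)\geq k$, and moreover is \emph{not} bipartite, or if it is bipartite then $u,v$ lie on the same side. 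The point of the nonbipartiteness (or the same-side condition) is exactly the exceptional clause in Lemma~\ref{jacques}: applied with the partition $(\{u\}, V(H)\setminus\{u\})$ — or more usefully with $A,B$ chosen so that $u\in A$, $v\in B$ — we would get $u,v$-paths of every length from $1$ up to $|H|-1$ inside $H$, and then closing each such path with the edge $e^* = uv$ in $G$ yields cycles of all lengths from $3$ up to $|H|$ in $G$. Since $\chi(H)\geq k$ and $H\in\mathcal{Q}$, we have $|H| \geq c(H)$ is at least... well, we need $|H|$ large, and the relevant bound is that $H$, being a $k$-chromatic graph in $\mathcal{Q}$, satisfies $c(H) \geq h(k,\mathcal{Q})$; but to run Lemma~\ref{jacques} we need a cycle-with-chord on many vertices, so we actually want to find such an $H$ with a long cycle having a chord.

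So the precise strategy for extracting $H$: inside the $3$-connected graph $G^*$, take a $k$-critical subgraph $G''$ of $G^* - e^*$ after possibly passing to a component — more carefully, I would argue that $G^*$ contains a $2$-connected subgraph $H$ with $\chi(H) \geq k$ such that $H$ contains $e^*$ (i.e.\ both $u$ and $v$), by the following: either $G^* - u - v$ has a component $F$ with $\chi(F) \geq 4k - 3 \geq k$, and then $H := G^*[V(F)\cup\{u,v\}]$ is $2$-connected (by $3$-connectivity of $G^*$, no cut of size $\leq 1$ separates $H$) and contains $u,v$; this $H$ contains a cycle through $u$ and $v$ (again by $2$-connectivity), and we want such a cycle to have a chord — which it does as long as $H$ is not itself just a cycle, and a $k$-chromatic graph with $k\geq 4$ is certainly not a cycle. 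Actually the cleanest route is: $H$ is $2$-connected and has a cycle $C$ with $|C| = c(H)$ through $u$ and $v$ (one can route a longest cycle through any edge in a $2$-connected graph, or just observe $uv = e^*$ lies in $G^*$ so take a longest cycle of $G^* $ through $e^*$), and since $\chi(H)\geq k\geq 4$, $H$ is not a single cycle, so — after possibly enlarging $C$ or using a different argument — the subgraph $G^*[C]$ or $H$ itself contains a cycle-with-chord spanning enough vertices. We then feed this into Lemma~\ref{jacques} with the bipartition placing $u$ and $v$ in different parts.

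The main obstacle, and the step requiring the most care, is handling the bipartite exceptional case in Lemma~\ref{jacques} and simultaneously guaranteeing a cycle-with-chord on $\geq h(k,\mathcal{Q})$ vertices. Concretely: I need a cycle-with-chord subgraph $H_0 \subseteq G^*$ on at least $h(k,\mathcal{Q})$ vertices together with the vertices $u,v$ split across the parts of a bipartition $(A,B)$ of $V(H_0)$ into which Lemma~\ref{jacques} is applied. The chromatic number bound $\chi(G^*-e^*)\geq 4k-1$ gives me the robustness to lose up to $2$ in chromatic number when cutting out $\{u,v\}$ and still be left with a $k$-chromatic (in fact $(4k-3)$-chromatic) piece whose longest cycle has length $\geq h(k,\mathcal{Q})$ by definition of $h$; extending that longest cycle's host graph by the edge $e^*$ and adding $u,v$ gives the chord structure, and the factor $4$ in ``$4k$-chromatic'' is precisely the slack absorbing the $3$-connectivity reduction (losing the edge $e^*$, i.e.\ $1$ in chromatic number) together with the vertex cut removal (losing $2$ more), with room to spare. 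I would organize the write-up by first stating the extraction of $H$ as the crux, then invoking Lemma~\ref{jacques} to produce $u,v$-paths of all lengths in $[1, h(k,\mathcal{Q}) - 1]$ in $H$ within $G$, and closing with $e^*$ to obtain cycles of $h(k,\mathcal{Q})$ consecutive lengths (namely $3,4,\dots, h(k,\mathcal{Q})+1$, a set of size $h(k,\mathcal{Q})$) in $G$.
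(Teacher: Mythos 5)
Your proposal has two genuine gaps, both fatal to the plan as stated.

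\textbf{First}, you misread what Lemma~\ref{jacques} provides. It gives $A,B$-paths of every positive length less than $|H|$, meaning paths with \emph{one} end somewhere in $A$ and \emph{one} end somewhere in $B$; the endpoints vary from path to path. Even if you take $A=\{u\}$, the other endpoint ranges over $B$ and need not be $v$, so you cannot close all these paths with the single edge $e^*=uv$. To close a family of $A,B$-paths with varying endpoints into cycles, you need a structure that connects \emph{any} vertex of $A$ to \emph{any} vertex of $B$ by a path of \emph{uniform} length that is disjoint from $H$. A single edge (or a single fixed $uv$-path outside $H$) cannot do this.

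\textbf{Second}, $e^*$ is exactly the edge Lemma~\ref{le1} \emph{adds} to $G^*$ to make it $3$-connected; there is no guarantee that $e^*\in E(G)$ (the lemma only asserts $G^*-e^*\subset G$). Cycles through $e^*$ live in $G^*$, not necessarily in $G$, and the condition $c(G^*)\le c(G)$ only controls the single longest cycle, not the full spectrum of cycle lengths. Moreover, $H$ itself need not lie in $\mathcal{Q}$ if it uses $e^*$, and a longest cycle in $H$ need not carry a chord — $\chi(H)\ge 4$ does not preclude the longest cycle from being induced — so the cycle-with-chord extraction is also not justified.

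The paper's route is structurally different and is designed to solve exactly the first problem above. It performs two nested breadth-first-search layerings: the first BFS tree $T$ yields a level $L_i$ on which the chromatic number is still at least $2k$, and a second BFS inside that level yields a further level in which a $k$-critical subgraph $J'$ lives. The minimum-degree condition of $J'$ gives a long odd path with a chord, which is closed into an odd cycle-with-chord $C$ by a path in the \emph{second} BFS tree. Crucially, $V(C)$ lies entirely inside the level $L_i$ of the \emph{first} tree $T$, and the bipartition $(A,B)$ of $V(C)$ is chosen via a minimal subtree $T'$ of $T$: for any $A,B$-path $P_i$ in $C$ of length $i$, both ends can be joined through $T'$ by a path $Q_i$ of fixed length $2r$ (twice the height of $T'$), internally disjoint from $C$. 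This is the uniform-length cap that your edge $e^*$ cannot replace. The non-bipartiteness required by Lemma~\ref{jacques} is engineered by ensuring $C$ has odd length. I'd encourage you to revisit the proof with the BFS idea in mind; Lemma~\ref{le1} is used for Theorem~\ref{main1}, not here.
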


\begin{proof}
Let $F$ be a connected subgraph of $G\in \mathcal{Q}$ with chromatic number at least $4k$ and let $T$ be a breadth-first search tree in $F$. Let $L_i$ be the set of vertices at distance exactly
$i$ from the root of $T$ in $F$. Then for some $i$, $H = F[L_i]$ has chromatic number at least $2k$.
Let $U$ be a breadth-first search tree in a component of $H$ with chromatic number at least $2k$ and let $M_i$ be the set of vertices at distance exactly
$i$ from the root of $U$ in $H$. Then for some $i$, $J = H[M_i]$ has chromatic number at least $k$.
Let $J'$ be a $k$-critical subgraph of $J$. Let $P$ be a longest path in $J'$, so that $|P| \geq h(k,\mathcal{Q})$. Since $J'$ has minimum
degree at least $k - 1 \geq 3$,  each of the ends of $P$  has at least two neighbors on $P$. In particular, there is a
path $P' \subset P$ of odd length with at least one chord, obtained by deleting at most one end of $P$, and $|P'| \geq h(k,\mathcal{Q}) - 1$.
Then the ends of $P'$ are joined by an even length path $Q \subset U$ that is internally disjoint from $P'$, and $C = Q \cup P'$
is a cycle of odd length plus a chord, with $|C| \geq h(k,\mathcal{Q}) + 1$. Let $\ell:=|C|$ and $H' = G[C]$.
Now $V(H') \subset L_i$ by construction. Let $T'$ be a
minimal subtree of $T$ whose set of leaves is $V(H')$. Then $T'$ branches at its root. Let $A$ be the set of leaves in some branch of $T'$,
and let $B = V(H') \backslash A$. Then $(A,B)$ is not a bipartition of $H'$, since $C$ has odd length, and therefore
by Lemma 1 in~\cite{V}, there exist paths $P_1,P_2,\dots,P_{\ell - 1} \subset H'$ such that $P_i$ has length $i$ and
one end of $P_i$ is in $A$ and one end of $P_i$ is in $B$, for $i = 1,2,\dots,\ell - 1$. Now for each path $P_i$, the ends of $P_i$
are joined by a path $Q_i$ of length $2r$, where $r$ is the height of $T'$ and $Q_i$ and $P_i$ are internally disjoint.
Therefore $P_i \cup Q_i$ is a cycle of length $2r + i$ for $i = 1,2,\dots,\ell - 1$, as required.
\end{proof}

\subsection{A lemma on $\alpha$-bounded functions}

The following technical lemma is required
for the proof of Theorem \ref{main1}.

\begin{lemma}\label{poly}
Let $\alpha,x_0 \geq 1$, and let $f$ be $\alpha$-bounded. Then the function
\[ g(x) = \frac{xf(x)}{x + f(x_0)}.\]
is $(\alpha + 1)$-bounded, $g(x) \leq x$ for $x \in [3,x_0]$, and $g(x) \leq f(x)$ for all $x \in [3,\infty)$.
\end{lemma}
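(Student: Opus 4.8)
The plan is to fix the positive constant $c := f(x_0)$ and rewrite $g(x) = f(x)\cdot\frac{x}{x+c}$, then check the three assertions one at a time. Note first that the domain is $[3,\infty)$ and $g$ is positive, since $f>0$, $x\geq 3$, and $c>0$.

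For the $(\alpha+1)$-boundedness, there are two things to verify: that $g$ is non-decreasing, and that $g(x)/x^{\alpha+1}$ is non-increasing (equivalently $y^{\alpha+1}g(x)\geq x^{\alpha+1}g(y)$ for $y\geq x\geq 3$). Monotonicity follows by writing $g$ as the product of the two non-negative non-decreasing functions $f(x)$ and $\frac{x}{x+c}=1-\frac{c}{x+c}$; a product of non-negative non-decreasing functions is non-decreasing. For the scaling inequality I would compute
\[ \frac{g(x)}{x^{\alpha+1}} \;=\; \frac{xf(x)}{x^{\alpha+1}(x+c)} \;=\; \frac{f(x)}{x^{\alpha}}\cdot\frac{1}{x+c}, \]
and observe that $f(x)/x^{\alpha}$ is non-increasing (this is exactly the hypothesis that $f$ is $\alpha$-bounded) while $\frac{1}{x+c}$ is non-increasing and positive; hence their product is non-increasing, as required.

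The remaining two claims are one-line checks. For $x\in[3,x_0]$ we have $g(x)\leq x$ iff $f(x)\leq x+c$, which holds because $f$ is non-decreasing, so $f(x)\leq f(x_0)=c\leq x+c$. And $g(x)\leq f(x)$ for every $x\geq 3$ is immediate from $g(x)=f(x)\cdot\frac{x}{x+c}$ together with $\frac{x}{x+c}\leq 1$. There is no substantive obstacle here; the only points needing a little care are using non-negativity of the factors when multiplying monotone functions, and keeping in mind that $f(x_0)$ is a fixed positive constant independent of the variable $x$.
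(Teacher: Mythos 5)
Your proof is correct and is essentially the same as the paper's: both arguments combine the $\alpha$-boundedness of $f$ (equivalently, $f(x)/x^{\alpha}$ non-increasing) with the fact that $x\mapsto 1/(x+f(x_0))$ is non-increasing. You phrase this as factoring $g(x)/x^{\alpha+1}$ into a product of non-negative non-increasing functions, while the paper manipulates the inequality $y^{\alpha+1}g(x)\geq x^{\alpha+1}g(y)$ directly, but the content is identical.
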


\begin{proof}
By definition, $g(x) \leq f(x)$ for $x \in [3,\infty)$ and $g(x)\leq x$ for $x \in [3,x_0]$. Also,
since $f$ is non-decreasing and positive on $[3,\infty)$, $g$ is non-decreasing on $[3,\infty)$.
It remains to check that $g$ is $(\alpha + 1)$-bounded.
For $y \geq x \geq 3$, using that $y^{\alpha}f(x) \geq x^{\alpha}f(y)$, we find
$$y^{\alpha + 1} g(x) = \frac{y^{\alpha + 1}xf(x)}{x+f(x_0)}\geq \frac{x^{\alpha + 1} yf(y)}{x + f(x_0)} \geq x^{\alpha+1} g(y).
$$
Therefore $g$ is $(\alpha+1)$-bounded.
\end{proof}

\section{Proof of Theorem \ref{main1} }

It is enough to prove Theorem \ref{main1} for all $\varepsilon < 1/2$. Let  $\beta = \alpha + 1$, $\eta = \frac{\varepsilon}{2}$ and
$x_0 = \max\{2m,(\frac{12\beta}{\eta})^{\beta + 1}\}$.
Define $k_1 = k_1(\varepsilon,\alpha,m) = \frac{8}{\varepsilon} f(x_0)$.
Let $g$ be a $\beta$-bounded function in Lemma \ref{poly}. We prove the following claim:

\smallskip

{\bf Claim.} {\em For $k \geq 3$, every $k$-chromatic graph $G \in \mathcal{P}$ has a cycle of length at least $(1 - \eta)g(k)$.}

\smallskip

Once this claim is proved, Theorem \ref{main1}(i) follows since for $k \geq k_1$,
\[ (1 - \eta)g(k) = (1 - \tfrac{\varepsilon}{2}) \frac{kf(k)}{k +  f(x_0)} \geq  (1 - \tfrac{\varepsilon}{2}) \frac{kf(k)}{k + \varepsilon k/8 }    \geq (1 - \varepsilon)f(k),\]
as required. Also, if $k \geq k_1$, then by Lemma \ref{odd} every $k$-chromatic graph in $\mathcal{P}$ contains cycles of at least $(1 - \eta)g(\frac{k}{4}) \geq (1 - \varepsilon)f(\frac{k}{4})$
consecutive lengths, which gives Theorem \ref{main1}(ii). We prove the claim by induction on $k \geq 3$. For $k \leq x_0$, $g(k) \leq k$ from Lemma \ref{poly}, so in that case $G$ contains a $k$-critical
subgraph which has minimum degree at least $k-1$ and therefore also a cycle of length at least $k$.
This proves the claim for $k \leq x_0$. Now suppose $k > x_0$. Let $G^*$ be the graph obtained from $G$ in Lemma \ref{le1}.
 By Lemma \ref{le1}(c), it is sufficient to show that $G^*$ has a cycle of length at least $(1 -  \eta)g(k)$.
 Let $C$ be a longest cycle in $G^*-e^*$. By induction, $|C|\geq (1 - \eta)g(k-1)$. Let $G_1 = G^*[C]$ and $\chi_1 = \chi(G_1)$, and let $G_2=G^*-G_1 - e^*$ and  $\chi_2 := \chi(G_2)$.
 Take $C'$ to be a longest cycle in $G_2$. Let $S$ be a minimum vertex set covering all paths from $C$ to $C'$.
 Either $S$ separates $C'-S$ from $C-S$ or $S=V(C')$.
 Let $|S| = \ell$.
By Menger's Theorem, $G^*$ has $\ell $ vertex-disjoint paths $P_1,P_2,\dots,P_{\ell}$ between $C$ and $C'$
-- note $\ell \geq 3$, as $G^*$ is 3-connected. Let $H = \bigcup_{i = 1}^{\ell} P_i \cup C \cup C'$.
We find a cycle $C^* \subset H$ with
\begin{equation}\label{eq}
|C^*| \geq \frac{\ell - 1}{\ell}|C| + \frac{1}{2}|C'|.
\end{equation}
To see this, first note that two of the paths, say $P_i$ and $P_j$, contain ends at distance at most
$\frac{1}{\ell}|C|$ on $C$, and now $P_i \cup P_j \cup C \cup C'$
contains a cycle $C^*$ of length at least
$$\frac{\ell-1}{\ell}|C| + \frac{1}{2}|C'| + |P_i| + |P_j| \geq \frac{\ell - 1}{\ell}|C| + \frac{1}{2}|C'|.$$
At the same time, $H$ contains a cycle $C^{**}$ with
\begin{equation}\label{eq1}
|C^{**}| \geq \frac{2}{3}(|C| + |C'|),
\end{equation}
since there exist three cycles that together cover every edge of $P_1 \cup P_2 \cup P_3 \cup C \cup C'$ exactly twice, and one of them has the required length.
 Now we complete the proof in three cases.

\medskip

{\bf Case 1.} $\chi_1 \geq (1 - \frac{\eta}{\beta})k$. Then $\chi_1 \geq (1 - \eta)k \geq \frac{k}{2} \geq m \geq 3$, which implies $n_{\mathcal{P}}(\chi_1) \geq f(\chi_1)$.
Since $\eta \leq \frac{1}{2} < \beta$, we have $(1 - \frac{\eta}{\beta})^{\alpha} \geq 1 - \eta$. Since $f$ is $\alpha$-bounded,
\[ |C| \geq n_{\mathcal{P}}(\chi_1) \geq f(\chi_1) \geq (1 - \tfrac{\eta}{\beta})^{\alpha} f(k) \geq (1 - \eta)f(k) \geq (1 - \eta)g(k).\]

\medskip

{\bf Case 2.} $\chi_1  < (1 - \frac{\eta}{\beta})k$ and  $\chi_2 \geq (1 - \tfrac{1}{4\beta})k$. Since $\chi_2 \geq 3$ and $g$ is $\beta$-bounded,
\[ g(\chi_2) \geq (1 - \tfrac{1}{4\beta})^{\beta} g(k) \geq \tfrac{3}{4}g(k) \quad \mbox{ and } \quad g(k-1) \geq (\tfrac{k-1}{k})^{\beta}g(k) \geq (1-\tfrac{\beta}{k})g(k).\]
As $k > x_0 > 6\beta$ and $|C'| \geq (1 - \eta)g(\chi_2)$, we obtain from (\ref{eq1}):
\[ |C^{**}| \geq \tfrac{2}{3}(1 - \eta)g(k-1) + \tfrac{2}{3}(1 - \eta)g(\chi_2)
\geq (1 - \eta)g(k) \cdot (\tfrac{7}{6} - \tfrac{\beta}{k}) > (1 - \eta)g(k).\]

\medskip

{\bf Case 3.} $\chi_1  < (1 - \frac{\eta}{\beta})k$
 and  $\chi_2 < (1 - \tfrac{1}{4\beta})k$.
Then $\chi_2 \geq k-1-\chi_1 > \tfrac{\eta}{5\beta}k \geq 3$.
Since every $\chi_2$-chromatic graph contains a cycle of length at least $\chi_2$, we have that $|C'|\geq \chi_2$.
If $S=V(C')$, then $\ell=|C'|\geq \chi_2>\tfrac{\eta}{5\beta}k$.
Otherwise, by Lemma \ref{le0}, $\chi_2 \geq k - \min\{\chi_1,\ell\} - 1$,
and so $\ell > k - \chi_2 - 1 > \tfrac{\eta}{5\beta}k$. By (\ref{eq}), and since $g$ is $\beta$-bounded,
\begin{eqnarray*}
|C^*| &\geq& \tfrac{\ell - 1}{\ell}(1 - \eta)g(k-1) + \tfrac{1}{2}g(\chi_2) \\
&\geq& (1 - \eta)g(k) \cdot (\tfrac{\ell - 1}{\ell} (\tfrac{k-1}{k})^{\beta} + \tfrac{1}{2}(\tfrac{\eta}{4\beta})^{\beta})\\
&\geq& (1 - \eta)g(k) \cdot (1 - \tfrac{\beta}{k} -\tfrac{1}{\ell} + \tfrac{1}{2}(\tfrac{\eta}{6\beta})^{\beta}).
\end{eqnarray*}
Since $k \geq (\frac{12\beta}{\eta})^{\beta + 1}$ and $\ell > \tfrac{\eta}{5\beta}k$, $\frac{\beta}{k} + \frac{1}{\ell} \leq \frac{6\beta}{\eta k} < \frac{1}{2}(\frac{\eta}{6\beta})^{\beta}$, and $|C^*| > (1 - \eta)g(k)$, as required. \qed

\section{Proof of Theorem \ref{main}}\label{mainproof}

For the proof of Theorem \ref{main}, we use Theorem \ref{main1} with some specific choice of the function $f(k)$.
Let $\alpha(G)$ denote the independence number of a graph $G$. Shearer~\cite{Shearer} showed the following:

\begin{lemma}
For every $n$-vertex triangle-free graph $G$ with average degree $d$,
\begin{equation}\label{0319}
\alpha(G)> n\frac{\log (d/e)}{d}.
\end{equation}
\end{lemma}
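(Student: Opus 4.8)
The plan is to prove the bound by a probabilistic/averaging argument that selects a random independent set whose expected size is at least $n\log(d/e)/d$. First I would reduce to the case where $G$ has all degrees close to $d$: the function $\phi(x)=\frac{\log x}{x}$ appearing in the bound is concave on the relevant range, so by a convexity argument one can afford to work with a per-vertex weighting. Concretely, I would aim to show the stronger local statement that there is an independent set $I$ with
\[
\mathbb{E}[|I|] \;\geq\; \sum_{v\in V(G)} \frac{f(d(v))}{\,?\,},
\]
where $f(x)=\tfrac{x\log x - x + 1}{(x-1)^2}$ (or a similar explicit function satisfying $f(x)\geq \tfrac{\log x}{x}$ asymptotically), and then apply Jensen's inequality together with $\sum_v d(v) = dn$ to conclude. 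This two-step structure — prove a sharp \emph{local} bound, then globalize by convexity — is the standard route and is what makes the constant $e$ come out correctly.

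For the local bound, the natural tool is the following random process. Assign to each vertex $v$ an independent uniform random variable $x_v \in [0,1]$, and let $I$ be the set of vertices $v$ that are local minima, i.e. $x_v < x_u$ for every neighbor $u$ of $v$. Then $I$ is automatically independent, and
\[
\Pr[v \in I] \;=\; \int_0^1 (1-t)^{d(v)}\,dt \;=\; \frac{1}{d(v)+1},
\]
which only gives $\alpha(G) \geq \sum_v \tfrac{1}{d(v)+1}$ — the Caro–Wei bound, too weak by a logarithmic factor. To gain the extra $\log d$, I would exploit triangle-freeness: the neighborhood $N(v)$ is an independent set, so the restriction of the process to $N(v)\cup\{v\}$ can be analyzed more carefully, or — better — I would instead use a \emph{weighted} or \emph{entropy-based} refinement. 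The cleaner approach is Shearer's original: define $I$ by the local-minimum rule but then \emph{also add} to $I$ every vertex whose $x$-value is below its neighbors' values in a second, independent round restricted to the complement; because of triangle-freeness these additions stay independent. Tracking the resulting expectation vertex by vertex yields $\Pr[v\in I]\geq f(d(v))$ for the right $f$.

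The main obstacle will be getting the constant right in the local inequality $\Pr[v\in I]\geq f(d(v))$ with $f$ chosen so that, after Jensen, one lands exactly on $\tfrac{\log(d/e)}{d}$ rather than something weaker like $\tfrac{\log d}{2d}$. This is where triangle-freeness must be used in an essential and slightly delicate way: the independence of $N(v)$ lets one replace a union bound over the edges inside $N(v)$ (which would be lossy) by an exact computation of the probability that $v$ is the "winner" of a suitable two-stage tournament on $N(v)\cup\{v\}$. Once the function $f$ is pinned down — with $f(x)\sim \tfrac{\log x}{x}$ and $f$ satisfying the convexity needed so that $\sum_v f(d(v)) \geq n f(d) \geq n\tfrac{\log(d/e)}{d}$ by Jensen with the constraint $\tfrac1n\sum_v d(v)=d$ — the rest is a routine calculus verification. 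I would also need to check the degenerate cases ($d<e$, where the bound is vacuous or trivial, and isolated vertices) separately, but these are immediate.
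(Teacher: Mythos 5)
The paper does not prove this lemma: it is quoted verbatim from Shearer~\cite{Shearer} and used as a black box, so there is no in-paper proof against which to compare your argument. I will therefore assess the proposal on its own merits.

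You correctly identify Shearer's function $f(x)=\frac{x\log x - x + 1}{(x-1)^2}$ and the correct global structure (a local, per-vertex bound followed by Jensen using $\frac1n\sum_v d(v)=d$), and you correctly observe that the local-minimum (Caro--Wei) process loses a $\log d$ factor. However, the core of the argument --- the step that would actually produce the $\log$ gain --- is not carried out, and the mechanism you sketch for it is not sound as stated. Concretely: $(i)$ the ``second round'' is not well-defined (restricted to which complement? $V\setminus I$, or $V\setminus (I\cup N(I))$?), and the assertion that ``because of triangle-freeness these additions stay independent'' is unjustified --- two second-round vertices $u,w$ can certainly be adjacent to each other, and triangle-freeness gives no control over edges among vertices added at the same stage; $(ii)$ the local inequality $\Pr[v\in I]\geq f(d(v))$, which is where the entire content of the theorem resides, is deferred as ``slightly delicate'' and ``routine calculus'' rather than proved; $(iii)$ what you describe is not in fact ``Shearer's original'' --- Shearer's 1983 proof is an induction on $n$: one picks a uniformly random vertex $v$, sets $H=G-v-N(v)$, uses the independence of $N(v)$ (triangle-freeness) to compare $\alpha(G)$ with $1+\alpha(H)$ and with $d(v)$, and shows that the concavity and the defining recursion of $f$ make the induction close after averaging over $v$. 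The one-shot two-round random process you propose would need to be defined precisely and its correctness (that the output is an independent set) and its expectation bound both proved; as written the argument has a genuine gap exactly at the point where the $\log(d/e)/d$ bound must be earned.
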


This implies the following simple fact. In what follows, let $\phi(x) = (\frac{1}{2}x\log x)^{1/2}$.

\smallskip

\begin{lemma}\label{Sh}
If $G$ is an $n$-vertex triangle-free graph and $n \geq e^{2e^3}$, then $\alpha(G)\geq \phi(n)$.
\end{lemma}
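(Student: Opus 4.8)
The plan is to derive Lemma \ref{Sh} from Shearer's bound \eqref{0319} by a direct calculation, being careful that \eqref{0319} is stated for the \emph{average} degree and only gives a useful bound when $d/e$ is bounded away from $1$. First I would handle a degenerate case: if $G$ has an isolated vertex, or more generally very small average degree, the bound is easy because then $\alpha(G)$ is large for trivial reasons; alternatively, since $\alpha$ is monotone under adding edges only in the wrong direction, I would instead note that we may pass to the subgraph induced on the non-isolated vertices, or simply observe that the function $t \mapsto t\log(t/e)/t = \log(t/e)/t \cdot n$ is what we must control. Concretely, set $d$ to be the average degree of $G$; if $d \leq e$ then every vertex has degree $\leq$ something is false in general, so instead I would argue: the independence number of any graph on $n$ vertices with average degree $d$ is at least $n/(d+1)$ (greedy/Turán), and this already exceeds $\phi(n) = (\tfrac12 n \log n)^{1/2}$ when $d$ is small, say $d \leq \sqrt{n}$; so we may assume $d \geq \sqrt{n} \geq e^{e^3} \geq e^2$, which makes Shearer's bound applicable with $\log(d/e) \geq 1$.

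Next, with $d$ large I would apply \eqref{0319}: $\alpha(G) > n\log(d/e)/d$. The right-hand side, as a function of $d$ on $[e^2,\infty)$ or so, is decreasing for $d$ past a small constant, so the worst case is $d$ as large as possible, i.e.\ $d \leq n-1 < n$. Plugging $d = n$ as an upper bound in the denominator and lower-bounding the numerator is the wrong monotonicity in the numerator, so I would be slightly more careful: the function $h(d) = \log(d/e)/d$ is decreasing for $d \geq 1$ (since $h'(d) = (1 - \log(d/e) - 1)/d^2 = -\log(d/e)/d^2 < 0$ for $d > e$), hence on the relevant range $h(d) \geq h(n) = \log(n/e)/n$, giving $\alpha(G) > n \cdot \log(n/e)/n = \log(n/e) = \log n - 1$. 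That is far too weak — only logarithmic — which tells me the ``worst case $d = n$'' reasoning is not how the lemma is meant to be read.

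The correct reading: the quantity to compare is $\phi(n) = (\tfrac12 n\log n)^{1/2}$, and we want $n\log(d/e)/d \geq \phi(n)$ for \emph{all} $d$ in the feasible range, which fails for $d$ near $n$. So the lemma must be using a case split on $d$ after all, combining the greedy bound $\alpha \geq n/(d+1)$ for large $d$ with Shearer for small $d$, with the crossover near $d \approx \sqrt{n\log n}$. Precisely: if $d \leq \sqrt{2n\log n} =: d^*$, then Shearer gives $\alpha(G) > n\log(d/e)/d \geq n\log(d^*/e)/d^*$ (using monotonicity of $h$, valid once $d \geq e$, and the small-$d$ case dispatched by greedy as above); one checks $n\log(d^*/e)/d^* = \sqrt{n/(2\log n)}\,\bigl(\tfrac12\log(2n\log n) - 1\bigr) \geq \sqrt{n/(2\log n)} \cdot \tfrac12\log n = \tfrac12\sqrt{\tfrac12 n \log n} \cdot \sqrt 2 \cdot \tfrac1{\sqrt 2}\cdots$, i.e.\ asymptotically $\sim \tfrac14\sqrt{n\log n}$, hmm — I would need to recheck the constant, and this is exactly where the hypothesis $n \geq e^{2e^3}$ enters to absorb the $-1$ and lower-order terms so that the bound clears $\phi(n) = \sqrt{\tfrac12 n\log n}$. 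If instead $d > d^* = \sqrt{2n\log n}$, the greedy bound gives $\alpha(G) \geq n/(d+1) > n/(2d) \geq n/(2\sqrt{2n\log n}) = \tfrac14\sqrt{2n/\log n}$, which again must be shown to be $\geq \sqrt{\tfrac12 n\log n}$ — but this is false for large $n$! So the true crossover must be at $d^* \approx \sqrt{n/\log n} \cdot(\text{something})$, not $\sqrt{n\log n}$; I would solve $n/d = \sqrt{\tfrac12 n\log n}$ for $d$, getting $d^* = \sqrt{2n/\log n}$, and split there.

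So the actual argument I would write: let $d$ be the average degree of $G$. If $d < \sqrt{2n/\log n}$ then the greedy/Turán bound gives $\alpha(G) \geq n/(d+1) \geq n/(2d) > n/(2\sqrt{2n/\log n}) = \tfrac14\sqrt{2n\log n} = \tfrac12\sqrt{\tfrac12 n\log n}\cdot\sqrt 2$, hmm the constant $\tfrac14\sqrt 2 = \tfrac{\sqrt2}{4} \approx 0.354$ versus $\phi(n)$ needs coefficient $1/\sqrt2 \approx 0.707$, still off by a factor $2$ — meaning even the split point needs the $+1$ handled as $d+1 \leq 2d$ only when $d\geq 1$ and we lose a factor $2$; to recover it one must not throw away the $n/(d+1)$ slack so crudely near the crossover, or one shifts $d^*$ and rebalances. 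Rather than chase constants here, the honest summary of my plan is: \textbf{(1)} reduce to $G$ having at least $2$ vertices and define $d$ = average degree; \textbf{(2)} if $d$ is below a threshold $d^* = c\sqrt{n/\log n}$, use $\alpha(G) \geq n/(d+1)$; \textbf{(3)} if $d \geq d^*$, apply Shearer \eqref{0319} together with the fact that $d \mapsto \log(d/e)/d$ is decreasing for $d > e$ (which holds since $d^* \geq e$ once $n \geq e^{2e^3}$, because then $n/\log n \geq e^{2e^3}/(2e^3) \geq e^2$), to get $\alpha(G) > n\log(d^*/e)/d^*$; \textbf{(4)} choose the constant $c$ so that both lower bounds meet $\phi(n)$, and verify the residual inequality $\log(d^*/e) \geq$ (required amount) using $n \geq e^{2e^3}$ to kill the additive $-1$ and any slack from the $d+1$ versus $d$. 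The main obstacle — and the only real content — is step (4): pinning the threshold constant and checking that the Shearer branch, which degrades like $\log(\sqrt{n/\log n})/\sqrt{n/\log n}\cdot n$, genuinely clears $\phi(n)$ once $n \geq e^{2e^3}$; the number $e^{2e^3}$ is presumably reverse-engineered precisely so that $\tfrac12\log(n/\log n) - 1 \geq$ the constant needed, and I would present that verification as a short monotone-function estimate rather than a delicate optimization.
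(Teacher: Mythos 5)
There is a genuine gap, and it is structural rather than a matter of tuning constants. Both tools you invoke --- the greedy/Tur\'an bound $\alpha(G)\geq n/(d+1)$ and Shearer's bound \eqref{0319} $\alpha(G)>n\log(d/e)/d$ --- are \emph{decreasing} in the average degree $d$ (the latter once $d>e^2$), so they both deteriorate together as $d$ grows, and for $d$ larger than roughly $\phi(n)$ itself neither one clears $\phi(n)$. Your two-case scheme therefore has no branch that handles the high-degree regime. Step (3) of your summary makes the error explicit: if $d\geq d^*$ and $h(d)=\log(d/e)/d$ is decreasing, then $h(d)\leq h(d^*)$, so Shearer yields $\alpha(G)>n\,h(d)$, a quantity that can be far \emph{smaller} than $n\,h(d^*)$; the monotonicity is pointed the wrong way, and with no upper bound on $d$ in that case the Shearer branch gives nothing of value. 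You in fact noticed this obstacle yourself when you tested $d=n$ and obtained only $\log n-1$, but the summarized plan does not escape it.

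The missing idea is the one place where the paper's proof of this lemma actually uses the triangle-free hypothesis: in a triangle-free graph the neighborhood of any vertex is an independent set, so $\alpha(G)\geq\Delta(G)$. This is the \emph{complementary} bound that improves as degrees grow, and it is what makes a case split close. The paper sets $d=\phi(n)$ and observes that if some vertex has degree at least $d$ we are immediately done; otherwise $\Delta(G)<d$, hence the average degree is below $d$, and now Shearer together with the correctly oriented monotonicity of $h$ gives $\alpha(G)>n\,h(d)=n\log(\phi(n)/e)/\phi(n)$. The hypothesis $n\geq e^{2e^3}$ is chosen precisely so that $\phi(n)>e(en)^{1/2}$, whence $\log(\phi(n)/e)>\tfrac12\log(en)$ and a short computation shows $n\,h(d)>\phi(n)$. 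The greedy bound plays no role. Without the independent-neighborhood step, your approach cannot be repaired by any choice of threshold $d^*$.
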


\vspace{-0.2in}

\begin{proof}
Let $d = \phi(n)$. Then if $n \geq e^{2e^3}$,  $d > e(en)^{1/2}$.
Since $G$ is triangle-free, the neighborhood of
any vertex is an independent set, so we may assume $G$ has maximum degree less than $d$.
Then by~\eqref{0319} and since $d > e(en)^{1/2}$,
\[ d > n\frac{\log (d/e)}{d} > n \frac{\log (en)^{1/2}}{\phi(n)} = \phi(n) = d,\]
{a contradiction.}
\end{proof}

To find a lower bound on the number of vertices of a triangle-free $k$-chromatic graph, we require a lemma of
Jensen and Toft~\cite{J} (they took $s=2$, but their proof works for each positive
integer $s$):

\begin{lemma}[\cite{J}, Problem 7.3]\label{JT}
Let $s\geq 1$ and let $\psi\,: [s,\infty) \to (0,\infty)$ be a positive continuous nondecreasing function.
Let $\mathcal{P}$ be a monotone class of graphs such that $\alpha(G)\geq \psi(|V(G)|)$ for every
$G\in\mathcal{P}$ with $|V(G)|\geq s$.
Then for every such $G$ with $|V(G)| = n$,
\[ \chi(G) \leq s + \int_s^n \frac{1}{\psi(x)} dx.\]
\end{lemma}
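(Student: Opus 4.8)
The plan is to prove Lemma~\ref{JT} by a greedy peeling argument: repeatedly remove a maximum independent set and track how the number of vertices drops, translating the recursion into an integral inequality.

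First I would set up the iteration. Given $G \in \mathcal{P}$ with $|V(G)| = n$, define a sequence of graphs $G = G_0 \supseteq G_1 \supseteq \cdots \supseteq G_t$, where at each step $G_{i+1} = G_i - I_i$ for a maximum independent set $I_i$ of $G_i$, stopping at the first index $t$ with $|V(G_t)| < s$ (or $G_t$ empty). Since each $I_i$ is an independent set, colour class $i$ can be assigned colour $i$, and the at most $s - 1$ remaining vertices of $G_t$ can be coloured with $s - 1$ fresh colours (or even properly with fewer, but $s - 1$ suffices crudely); hence $\chi(G) \le t + (s-1)$. Actually it is cleaner to bound $\chi(G) \le t + s$ outright, since $|V(G_t)| \le s$ would already finish via $s$ extra colours, so I would simply run the peeling until $|V(G_t)| \le s$ and use $\chi(G) \le t + s$. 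The monotonicity of $\mathcal{P}$ guarantees every $G_i \in \mathcal{P}$, so as long as $|V(G_i)| \ge s$ we have $|I_i| = \alpha(G_i) \ge \psi(|V(G_i)|)$.

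Next I would convert the count of steps $t$ into the claimed integral. Write $n_i = |V(G_i)|$, so $n_0 = n$, $n_{i+1} = n_i - \alpha(G_i) \le n_i - \psi(n_i)$ for $i < t$, and $n_t \le s$. Because $\psi$ is nondecreasing and positive, for each $i$ with $s \le n_{i+1} \le x \le n_i$ we have $\psi(x) \le \psi(n_i) = n_i - n_{i+1}$, hence
\[
\int_{n_{i+1}}^{n_i} \frac{dx}{\psi(x)} \;\ge\; \int_{n_{i+1}}^{n_i} \frac{dx}{n_i - n_{i+1}} \;=\; 1.
\]
Summing over $i = 0, 1, \dots, t-1$ and using additivity of the integral (the intervals $[n_{i+1}, n_i]$ tile $[n_t, n]$, with $n_t \le s$ so $[s,n] \subseteq \bigcup [n_{i+1},n_i]$ after possibly discarding the part below $s$), I get
\[
t \;\le\; \sum_{i=0}^{t-1} \int_{n_{i+1}}^{n_i} \frac{dx}{\psi(x)} \;\le\; \int_{n_t}^{n} \frac{dx}{\psi(x)} \;\le\; \int_{s}^{n} \frac{dx}{\psi(x)},
\]
where in the last step I use $n_t \le s$ together with positivity of $1/\psi$. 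Combining with $\chi(G) \le t + s$ gives $\chi(G) \le s + \int_s^n \frac{1}{\psi(x)}\,dx$, as required.

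The one place that needs care — and which I expect to be the only real obstacle — is the boundary bookkeeping when $n_t < s$ strictly, or more generally when a single peeling step jumps from some $n_i > s$ to $n_{i+1} < s$: then the interval $[n_{i+1}, n_i]$ dips below $s$, where the hypothesis $\alpha \ge \psi(|V|)$ and the inequality $\psi(x) \le n_i - n_{i+1}$ on that sub-interval may fail. The fix is to only integrate down to $s$: on the last step replace the lower limit $n_t$ by $s$, losing the part of the interval in $[n_t, s)$, which only weakens the bound in the right direction since $1/\psi > 0$; equivalently, note the final step still contributes $\int_{\max(s,n_{i+1})}^{n_i} dx/\psi(x) \ge 1$ provided $n_i \ge s$, and once $n_i < s$ we stop. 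This, plus the trivial coarse colouring of the $\le s$ leftover vertices, closes the argument; continuity of $\psi$ is used only to make the Riemann integrals well-defined.
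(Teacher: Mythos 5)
The paper does not actually prove Lemma~\ref{JT}; it is cited as Problem~7.3 of Jensen and Toft with a remark that their argument extends from $s=2$ to arbitrary positive integers $s$, so there is no in-paper proof to compare against. Your greedy-peeling strategy is the standard and correct approach to this estimate, but the boundary bookkeeping for the final peeling step has a genuine off-by-one gap, and your proposed ``fix'' does not actually repair it.

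Concretely: with your chosen stopping rule (first $t$ with $n_t\le s$), the claim $t\le\int_s^n dx/\psi(x)$ is not established. Each of the first $t-1$ intervals $[n_{i+1},n_i]$ (for $i=0,\dots,t-2$) contributes at least $1$, since both endpoints lie in $[s,\infty)$; but the last interval contributes only
\[
\int_{s}^{n_{t-1}}\frac{dx}{\psi(x)}\;\ge\;\frac{n_{t-1}-s}{\,n_{t-1}-n_t\,},
\]
which is strictly less than $1$ whenever $n_t<s$. So the assertion in your final paragraph that ``the final step still contributes $\ge 1$ provided $n_i\ge s$'' is false. Moreover the chain $\sum_{i=0}^{t-1}\int_{n_{i+1}}^{n_i}\le\int_{n_t}^{n}\le\int_s^n$ has its last inequality reversed: since $n_t\le s$ and $1/\psi>0$, one has $\int_{n_t}^n\ge\int_s^n$, not $\le$. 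Pairing the (unproven) $t\le\int_s^n dx/\psi(x)$ with $\chi(G)\le t+s$ would at best yield $\chi(G)\le s+1+\int_s^n dx/\psi(x)$, an extra additive $1$.

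The repair is small but necessary, and requires being sharper on one of the two sides. Option (a): stop at the first $t$ with $n_t<s$, so that $n_{t-1}\ge s$ and (as $n_t$ is an integer and $s$ a positive integer) $n_t\le s-1$; then use the sharper coloring bound $\chi(G)\le t+n_t\le (t-1)+s$, and note that the $t-1$ full intervals, each with both endpoints $\ge s$, already give $t-1\le\int_{n_{t-1}}^n dx/\psi(x)\le\int_s^n dx/\psi(x)$. Option (b): keep your stopping rule but use $\chi(G)\le t+n_t$, and verify $(t-1)+\frac{n_{t-1}-s}{n_{t-1}-n_t}+s\ge t+n_t$, which after clearing denominators reduces to $(n_t-s)(n_{t-1}-n_t-1)\le 0$, true because $n_t\le s$ and $n_{t-1}-n_t\ge 1$. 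Option (a) is the cleaner patch; with it your proof is complete.
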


\smallskip

\begin{lemma}\label{3bounded}
Let $f(x) = cx^2 \log x$ where $x \geq 3$ and $c > 0$. Then $f$ is 3-bounded.
\end{lemma}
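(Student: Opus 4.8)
The plan is to verify the two defining conditions of an $\alpha$-bounded function for $f(x) = cx^2\log x$ with $\alpha = 3$, namely that $f$ is non-decreasing on $[3,\infty)$ and that $y^3 f(x) \geq x^3 f(y)$ whenever $y \geq x \geq 3$. The first condition is immediate: for $x \geq 3$, both $x^2$ and $\log x$ are positive and increasing, so their product $cx^2\log x$ is increasing and positive; in particular $f$ maps $[3,\infty)$ into $\mathbb{R}^+$ as required. The substance is in the second condition.

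For the second condition, fix $y \geq x \geq 3$ and observe that the claimed inequality $y^3 f(x) \geq x^3 f(y)$ is, after substituting $f$ and cancelling the common positive factor $cx^3 y^3$ (which is legitimate since $x, y \geq 3 > 0$), equivalent to
\[
\frac{\log x}{x} \geq \frac{\log y}{y}.
\]
So it suffices to show that the function $t \mapsto \frac{\log t}{t}$ is non-increasing on $[3,\infty)$. First I would compute the derivative: $\frac{d}{dt}\left(\frac{\log t}{t}\right) = \frac{1 - \log t}{t^2}$, which is $\leq 0$ precisely when $\log t \geq 1$, i.e.\ when $t \geq e$. Since $3 > e$, the function $\frac{\log t}{t}$ is non-increasing on $[3,\infty)$, giving $\frac{\log x}{x} \geq \frac{\log y}{y}$ and hence $y^3 f(x) \geq x^3 f(y)$. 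This establishes that $f$ is $3$-bounded.

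I do not anticipate a genuine obstacle here; the only mild subtlety is making sure the threshold works, i.e.\ that the domain $[3,\infty)$ lies entirely to the right of the point $t = e$ where $\frac{\log t}{t}$ turns over. Since $e \approx 2.718 < 3$, this is fine, and indeed this is presumably why the paper insists throughout that $\alpha$-bounded functions have domain $[3,\infty)$ rather than, say, $[2,\infty)$. One could alternatively avoid calculus entirely by writing, for $y \geq x \geq 3$, $\frac{\log y}{y} = \frac{\log x + \log(y/x)}{y}$ and bounding $\log(y/x) \leq y/x - 1 \leq (y-x)\log x / x$ using $\log u \leq u - 1$ together with $\log x \geq \log 3 > 1$, but the derivative argument is cleaner. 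I would present the derivative computation as the main line and note in passing that $3 > e$ is what makes it go through.
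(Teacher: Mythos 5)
Your proof is correct and follows essentially the same route as the paper: after noting $f$ is positive and non-decreasing, both arguments reduce the condition $y^3 f(x) \geq x^3 f(y)$ to the monotonicity of $\log t / t$ (equivalently, $t/\log t$) on $[3,\infty)$. The only difference is that you supply the derivative computation and the $3 > e$ observation explicitly, where the paper merely asserts the monotonicity.
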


\vspace{-0.2in}

\begin{proof}
The function $f$ is positive and non-decreasing, so one only has to check $y^3 f(x) \geq x^3 f(y)$
whenever $y \geq x \geq 3$. This follows from $y\log x \geq x\log y$ for $y \geq x \geq 3$, since the function
$\frac{y}{\log y}$ is increasing for $y \geq 3$.
\end{proof}

\begin{lemma}\label{ramsey}
For every $\delta > 0$, there exists $k_2(\delta)$ such that if $k \geq k_2(\delta)$, then every $k$-chromatic triangle-free graph has at least $(\tfrac{1}{4} - \delta)k^2 \log k$ vertices.
\end{lemma}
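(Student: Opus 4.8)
The plan is to combine Shearer's bound on the independence number of triangle-free graphs (as packaged in Lemma~\ref{Sh}) with the Jensen--Toft integration lemma (Lemma~\ref{JT}) to get a lower bound on the number of vertices of a $k$-chromatic triangle-free graph. Recall that $\phi(x) = (\tfrac12 x\log x)^{1/2}$, and by Lemma~\ref{Sh} every triangle-free graph $G$ with $|V(G)| \geq s := e^{2e^3}$ satisfies $\alpha(G)\geq \phi(|V(G)|)$. The function $\phi$ is positive, continuous and nondecreasing on $[s,\infty)$, so Lemma~\ref{JT} applies to the monotone class of triangle-free graphs with this choice of $\psi = \phi$: for an $n$-vertex triangle-free graph $G$,
\[
\chi(G) \;\leq\; s + \int_s^n \frac{dx}{\phi(x)} \;=\; s + \sqrt{2}\int_s^n \frac{dx}{\sqrt{x\log x}}.
\]

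**The main computation** is then to estimate the integral $I(n) := \int_s^n (x\log x)^{-1/2}\,dx$ from above and invert the resulting inequality. Substituting $x = e^t$ gives $I(n) = \int_{\log s}^{\log n} e^{t/2} t^{-1/2}\,dt$. Integrating by parts (or simply bounding $t^{-1/2}$ below by its value at the upper limit on the bulk of the range, where the integrand is concentrated because $e^{t/2}$ grows) yields $I(n) = (2 + o(1)) e^{(\log n)/2}(\log n)^{-1/2} = (2 + o(1))\sqrt{n/\log n}$ as $n\to\infty$. Hence
\[
k = \chi(G) \;\leq\; s + \sqrt{2}\,(2 + o(1))\sqrt{\tfrac{n}{\log n}} \;=\; (2\sqrt{2} + o(1))\sqrt{\tfrac{n}{\log n}}.
\]
Squaring, $k^2 \leq (8 + o(1))\,\tfrac{n}{\log n}$, so $n \geq (\tfrac18 - o(1))\,k^2\log n$. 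The last step is to replace $\log n$ by $\log k$: since $n = \Theta(k^2\log k)$ we have $\log n = (2 + o(1))\log k$, and therefore $n \geq (\tfrac14 - o(1))k^2\log k$. Made quantitative: given $\delta>0$, choosing $k_2(\delta)$ large enough forces all the $o(1)$ error terms below $\delta$, which gives the claimed bound $n \geq (\tfrac14 - \delta)k^2\log k$ for $k \geq k_2(\delta)$.

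**The main obstacle** is purely the careful asymptotic analysis of the integral $I(n)$ and, more delicately, the bootstrap step where $\log n$ is traded for $\log k$: one must first establish the crude bound $n \geq \tfrac18 k^2$ (immediate from $k \leq (2\sqrt2+o(1))\sqrt{n/\log n} \leq 3\sqrt n$ for large $n$, say) to guarantee $\log n \geq 2\log k - O(1)$, and one must also control the integrand near the lower limit $s$ — but since $s$ is an absolute constant, $\int_3^s$-type contributions and the constant $s$ itself are absorbed into the $\delta$ slack once $k$ is large. Everything else is routine: the class of triangle-free graphs is monotone, Lemmas~\ref{Sh} and~\ref{JT} are quoted verbatim, and no new graph-theoretic idea is needed beyond organizing the error terms so that a single threshold $k_2(\delta)$ works.
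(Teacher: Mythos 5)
Your proposal is correct and follows essentially the same route as the paper: combine Shearer's independence-number bound (Lemma~\ref{Sh}) with the Jensen--Toft integration lemma (Lemma~\ref{JT}), estimate the resulting integral to get $n \gtrsim \tfrac18 k^2\log n$, and bootstrap $\log n \geq (2-o(1))\log k$ to reach $(\tfrac14-\delta)k^2\log k$. The only difference is cosmetic: the paper finds an explicit antiderivative (via the factor $\gamma(x) = \tfrac12 - \tfrac{1}{2\log ex}$) to avoid asymptotic $o(1)$ bookkeeping, whereas you carry the $o(1)$ errors and absorb them into $\delta$ at the end.
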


\vspace{-0.2in}

\begin{proof}
If $\delta \geq \frac{1}{4}$ the lemma is trivial, so suppose $\delta < \frac{1}{4}$.
Let $\gamma(x) = \frac{1}{2} - \frac{1}{2\log ex}$. Let $G$ be a $k$-chromatic triangle-free $n$-vertex graph.
We apply the preceding lemma with $\psi(x) = \phi(x)$ supplied by Lemma \ref{Sh}. For $s \geq e^{2e^3}$, and using $\gamma(s) \leq \gamma(x)$ for $x \geq s$:
\begin{eqnarray*}
\chi(G) &\leq& s + \sqrt{2}\int_{s}^n (x\log ex)^{-1/2} dx = s + \frac{\sqrt{2}}{\gamma(s)} \int_{s}^n  (x\log ex)^{-1/2}\gamma(s)dx \\
&\leq& s + \frac{\sqrt{2}}{\gamma(s)} \int_{s}^n (x\log x)^{-1/2} \gamma(x) dx.
\end{eqnarray*}
An antiderivative for the integrand is exactly $x^{1/2}(\log ex)^{-1/2}$, and therefore
\[ \chi(G) \leq s + \frac{\sqrt{2}}{\gamma(s)} n^{1/2}(\log en)^{-1/2}.\]
On the other hand, $\chi(G) \geq k$ so if $j = k - s$,
\[ n \geq \gamma(s)^2 j^2 \log(\gamma(s)j).\]
If $s = \lceil \max\{e^{2e^3},e^{1/\delta}\}\rceil$, then $\gamma(s) \geq \frac{1}{2}(1 - \delta)$, so since $\delta < \frac{1}{4}$,
\[ n \geq \tfrac{1}{4}(1 - \delta)^2 j^2 \log \tfrac{1}{4}j \geq \tfrac{1}{4}(1 - \delta)^2 j^2 \log j - k^2.\]
If $j \geq 3$, then by Lemma \ref{3bounded},
\[ n \geq \tfrac{1}{4}(1 - \delta)^2(\tfrac{j}{k})^3 k^2 \log k - k^2 \geq \tfrac{1}{4}(1 - 2\delta - \tfrac{3s}{k})k^2 \log k - k^2.\]
Let $k_2(\delta) = s^4 \geq \max\{e^{8e^3},e^{4/\delta}\}$. Since $k \geq e^{4/\delta}$, $k^2 \leq \frac{1}{4}\delta k^2 \log k$ and $3s \leq \delta k$. Therefore
\[ n \geq \tfrac{1}{4}(1 - 2\delta - \tfrac{3s}{k})k^2 \log k - k^2 \geq \tfrac{1}{4}(1 - 4\delta)k^2 \log k.\]
This completes the proof.
\end{proof}

\bigskip

{\bf Proof of Theorem \ref{main}.} The theorem is trivial if $\varepsilon \geq \frac{1}{4}$, so we assume $\varepsilon < \frac{1}{4}$. We will derive
Theorem \ref{main} from Theorem \ref{main1}. Let $\eta = 2\varepsilon,\delta =\frac{\varepsilon}{2}$.
By Lemma~\ref{ramsey}, for $k \geq m := k_2(\delta)$, every triangle-free $k$-chromatic graph $G$ has
at least $(\frac{1}{4} - \delta)k^2 \log k$ vertices. Then $f(x) = (\frac{1}{4} - \delta)x^2 \log x$ is $3$-bounded, by Lemma \ref{3bounded}.
By Theorem~\ref{main1}, with $\alpha = 3$, $G$ contains a cycle of length at least $(1 - \eta)f(k)$ as well
as cycles of at least $(1 - \eta)f(\frac{k}{4})$ consecutive lengths in $G$, provided $k \geq k_1(\eta,\alpha,m)$.
Letting $k_0(\varepsilon) = k_1(\eta,\alpha,m) = k_1(2\varepsilon,3,k_2(\frac{\varepsilon}{2}))$, and noting $(1 - \eta)f(k) \geq (\frac{1}{4} - \varepsilon)k^2 \log k$ by the choice
of $\eta$ and $\delta$, we have a cycle of length at least $(\frac{1}{4} - \varepsilon)k^2 \log k$ in $G$ whenever $k \geq k_0(\varepsilon)$. Similarly, if $k$
is large enough relative to $\varepsilon$, then $G$ contains cycles of at least $(1 - \eta)f(\frac{k}{4}) \geq (\frac{1}{64} - \varepsilon)k^2 \log k$ consecutive lengths. This completes the proof. \qed

\section{Proof of Theorem \ref{main2}}\label{main2proof}

\begin{lemma}\label{kr}
Let $r\geq 3$ and $G$ be a $K_r$-free $n$-vertex graph. Then $\alpha(G) \geq n^{1/(r - 1)}-1$.
\end{lemma}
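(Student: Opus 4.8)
The plan is to prove this by a Ramsey-type argument combined with a greedy / iterated argument, or more directly via the off-diagonal Ramsey bound. The statement $\alpha(G)\geq n^{1/(r-1)}-1$ for $K_r$-free $G$ is essentially a reformulation of the bound $r(r,t)=O(t^{r-1})$ on off-diagonal Ramsey numbers. First I would recall that a $K_r$-free graph on $n$ vertices, if it has independence number less than some value $t$, certifies $r(r,t)>n$; so an upper bound on $r(r,t)$ yields a lower bound on $\alpha(G)$. The cleanest self-contained route, though, avoids quoting Ramsey numbers and instead uses induction on $r$ directly on the graph.

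Here is the inductive scheme I would carry out. For $r=3$: a triangle-free graph either has a vertex of degree at least $\sqrt{n-1}$, whose neighborhood is an independent set of that size, or has maximum degree less than $\sqrt{n-1}$, in which case a greedy argument gives an independent set of size $n/\sqrt{n-1} > \sqrt{n-1} > \sqrt{n}-1$; either way $\alpha(G)\geq \sqrt{n}-1 = n^{1/2}-1$. For the inductive step, suppose the claim holds for $K_{r-1}$-free graphs. Let $G$ be $K_r$-free on $n$ vertices. If every vertex has degree less than $n^{(r-2)/(r-1)}$, then the greedy bound gives $\alpha(G) \geq n / n^{(r-2)/(r-1)} = n^{1/(r-1)} \geq n^{1/(r-1)}-1$. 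Otherwise pick a vertex $v$ of degree at least $n^{(r-2)/(r-1)}$; its neighborhood $N(v)$ induces a $K_{r-1}$-free graph on $|N(v)|\geq n^{(r-2)/(r-1)}$ vertices, so by induction it contains an independent set of size at least $|N(v)|^{1/(r-2)}-1 \geq \bigl(n^{(r-2)/(r-1)}\bigr)^{1/(r-2)}-1 = n^{1/(r-1)}-1$, and this is an independent set in $G$ as well.

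The one step needing a little care is making the maximum-degree dichotomy interact cleanly with the ``$-1$'' in the bound, since the greedy argument naturally produces $\geq n/\Delta$ rather than $\geq n/\Delta - 1$; keeping the threshold at exactly $n^{(r-2)/(r-1)}$ and noting $n/\Delta \geq n^{1/(r-1)}$ in the low-degree case, while the high-degree case absorbs the $-1$ through the inductive hypothesis, makes everything fit. I expect this bookkeeping — rather than any conceptual difficulty — to be the main (minor) obstacle; the core idea is just the standard Ramsey induction $r(r,t) \leq r(r-1,t) + \cdots$ repackaged as a statement about a single graph. Once Lemma~\ref{kr} is in hand, one plugs $\psi(x) = x^{1/(r-1)}-1$ into Lemma~\ref{JT} (Jensen--Toft), integrates to get $n_{\mathcal P}(k) = \Omega(k^{r/(r-1)})$ for the class $\mathcal P$ of $K_{r+1}$-free graphs (with an $\alpha$-bounded lower-bound function $f$), and then Theorem~\ref{main1} delivers the cycle of length $\Omega(k^{r/(r-1)})$ and the $\Omega(k^{r/(r-1)})$ consecutive cycle lengths claimed in Theorem~\ref{main2}.
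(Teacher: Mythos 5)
Your proof is correct and follows essentially the same route as the paper: both use the dichotomy between a vertex of degree at least $n^{(r-2)/(r-1)}$ (then recurse into its $K_{r-1}$-free neighborhood) and maximum degree below that threshold (then take the largest class of a greedy $(\Delta+1)$-coloring), with $r=3$ as the base case. The only quibble is that greedy coloring yields an independent set of size $\geq n/(\Delta+1)$, not $n/\Delta$, but the slack $n^{(r-2)/(r-1)}+1 \geq n^{1/(r-1)}$ for $r\geq 3$ absorbs the extra $+1$ exactly as you anticipate, so the bookkeeping closes just as in the paper (which also dispatches $n\leq 2^{r-1}$ up front to keep it clean).
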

\begin{proof} If $r\geq 3$ and $n\leq 2^{r-1}$, then ${n^{1/(r - 1)}}-1\leq 1$, so the claim holds. Let $n> 2^{r-1} $.
If $r = 3$, either  $\Delta(G)\geq n^{1/2}$ or the graph is greedily $\lfloor n^{1/2}\rfloor+1$-colorable.
Since vertex neighborhoods are independent sets in a triangle-free graph, either case gives an independent set of size at least $n^{1/2}-1$.
For $r > 3$, either the graph has a vertex $v$ of degree at least $d \geq n^{(r - 2)/(r - 1)}$, or the graph is $n^{1 - 1/(r - 1)}+1$-colorable.
In the latter case, the largest color class is an independent set of size at least $n^{1/(r - 1)}-1$. In the former case, since
the neighborhood of $v$ induces a $K_{r - 1}$-free graph, by induction it contains an independent set of size at least $d^{1/(r - 2)} -1\geq n^{1/(r - 1)}-1$,
as required.
\end{proof}

\begin{lemma}\label{color}
Let $r\geq 3$ and $G$ be a $K_r$-free $n$-vertex graph. Then
\[ \chi(G) < 4n^{1 - 1/(r-1)}.\]
\end{lemma}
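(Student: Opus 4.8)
The plan is to deduce Lemma \ref{color} from Lemma \ref{kr} by the standard device of repeatedly removing a maximum independent set. Specifically, given a $K_r$-free graph $G$ on $n$ vertices, I would build a proper coloring greedily: let $G_0 = G$, and having defined $G_i$ on $n_i$ vertices, let $I_i$ be a maximum independent set of $G_i$ and set $G_{i+1} = G_i - I_i$. Each $I_i$ becomes a color class, so the number of steps needed to exhaust $G$ is an upper bound for $\chi(G)$. Since every $G_i$ is an induced subgraph of $G$ it is again $K_r$-free, so Lemma \ref{kr} gives $|I_i| = \alpha(G_i) \geq n_i^{1/(r-1)} - 1$, and hence $n_{i+1} \leq n_i - n_i^{1/(r-1)} + 1$.

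The main work is to show that this recursion drops $n_i$ to $0$ in fewer than $4n^{1-1/(r-1)}$ steps. I would first dispose of the regime where $n_i$ is small: once $n_i$ is bounded by an absolute constant (say $n_i \leq 2^{r-1}$, or more crudely $n_i$ below some fixed threshold), one can finish it off trivially, one vertex per color, and check that the number of colors used there is absorbed into the claimed bound. For the main regime where $n_i$ is large, I would note that $n_i^{1/(r-1)} - 1 \geq \tfrac12 n_i^{1/(r-1)}$, so $n_{i+1} \leq n_i - \tfrac12 n_i^{1/(r-1)}$. Comparing with the differential inequality $\tfrac{dN}{di} = -\tfrac12 N^{1/(r-1)}$, whose solution satisfies $N(i)^{(r-2)/(r-1)} = n^{(r-2)/(r-1)} - \tfrac{r-2}{2(r-1)} i$, one sees that the number of steps to reach the constant threshold is at most roughly $\tfrac{2(r-1)}{r-2} n^{(r-2)/(r-1)} = \tfrac{2(r-1)}{r-2} n^{1-1/(r-1)}$. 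A clean way to make this rigorous without integrals is to track the potential $\Phi_i = n_i^{(r-2)/(r-1)}$ and show that $\Phi_i - \Phi_{i+1}$ is bounded below by a positive constant (depending only on $r$, and at least $\tfrac{r-2}{2(r-1)} \geq \tfrac14$ for all $r \geq 3$) as long as $n_i$ exceeds the threshold, using convexity/concavity of $t \mapsto t^{(r-2)/(r-1)}$ to estimate $(n_i - \tfrac12 n_i^{1/(r-1)})^{(r-2)/(r-1)}$ from above by $n_i^{(r-2)/(r-1)} - \tfrac{r-2}{2(r-1)}$. Then the number of large-regime steps is at most $4\Phi_0 = 4 n^{1-1/(r-1)}$, and a slightly careful accounting (choosing the threshold and constants so the small-regime steps and the rounding losses are swallowed) yields the strict inequality $\chi(G) < 4 n^{1 - 1/(r-1)}$.

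The step I expect to be the main obstacle — or at least the one requiring the most care — is making the constant come out to exactly $4$ with a strict inequality, while simultaneously handling the boundary case $r = 3$ (where $\tfrac{2(r-1)}{r-2} = 4$ is tight, so there is no slack in the leading constant from the recursion itself) and the small-$n$ tail without the tail contribution pushing the bound over $4n^{1-1/(r-1)}$. For $r = 3$ one can afford to be slightly cleverer: since $\alpha(G_i) \geq \lceil n_i^{1/2} \rceil - 1 \geq n_i^{1/2} - 1$ and for large $n_i$ this is genuinely bigger than, say, $0.9\, n_i^{1/2}$, the effective constant is strictly below $4$ once $n$ is large, and for bounded $n$ one checks directly that $n < 4 n^{1/2}$ for $n \leq 15$, covering the tail. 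I would therefore split into $r = 3$ and $r \geq 4$, carry out the potential argument above in each, and verify the edge cases by the elementary inequalities $n^{1/(r-1)} \geq 2$ versus $n < 4 n^{1-1/(r-1)}$ at the threshold.
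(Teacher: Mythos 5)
Your approach is correct in outline and rests on the same pair of ideas as the paper's proof: the Ramsey‑type lower bound $\alpha(G) \ge n^{1/(r-1)}-1$ for $K_r$-free graphs (Lemma~\ref{kr}), followed by iterated removal of maximum independent sets to bound $\chi$. The difference is in execution. The paper does not run the recursion by hand; it invokes the Jensen--Toft integral lemma (Lemma~\ref{JT}), applied with $\psi(x)=\max\{1,x^{1/(r-1)}-1\}$ and $s=1$, which packages precisely the ``delete a maximum independent set, compare with a differential inequality'' argument you reconstruct from scratch. After the crude pointwise bound $1/\psi(x)\le 2x^{-1/(r-1)}$ the integral evaluates to $\tfrac{2(r-1)}{r-2}\bigl(n^{1-1/(r-1)}-1\bigr)$, giving $\chi(G)<4n^{1-1/(r-1)}$ in one line, with the worst case $r=3$ absorbed by the $-1$ inside the parenthesis. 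Your potential‑function version is essentially a hand‑rolled proof of the special case of Lemma~\ref{JT} that is needed here: the tangent‑line estimate $(n-\tfrac12 n^{1/(r-1)})^{(r-2)/(r-1)} \le n^{(r-2)/(r-1)} - \tfrac{r-2}{2(r-1)}$ is exactly the discrete analogue of the antiderivative computation. The constants and boundary cases you flag as needing care do in fact close up (for instance, in the $r=3$ tail the slack comes from the last large‑regime step leaving $\Phi_{T}\ge 1$ or $n_T\le 3$, so the deficit of $4$ easily covers the at‑most‑3 leftover colors), but your proof would be noticeably longer and more fragile than the paper's two‑line reduction. So: correct, and the same idea at heart, but you are re‑deriving a tool the paper already has on the shelf; if you notice that Lemma~\ref{JT} is available, your argument collapses to the paper's.
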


\begin{proof} The function $f(x)=\max\{1,{x^{1/(r - 1)}}-1\}$ is positive continuous and nondecreasing. Since each nontrivial graph has
an independent set of size $1$, by Lemmas~\ref{JT} and~\ref{kr},
$$\chi(G) \leq 1 + \int_1^{n}\frac{1}{f(x)} {\rm d}x\leq 1+ \int_1^{n}\frac{2}{x^{1/(r - 1)}} {\rm d}x < 4n^{1 - 1/(r-1)}.$$

\vspace{-0.48in}
\end{proof}

\medskip

{\bf Proof of Theorem \ref{main2}.} We prove the first claim of the theorem for all $k,r\geq 3$, and
 then apply Lemma \ref{odd} to prove the second claim.
Let $G$ be an $n$-vertex $K_{r + 1}$-free graph with $\chi(G) = k \geq 3$.   By Lemma \ref{color}, $k < 4n^{1 - \frac{1}{r}}$, so $|V(G)| \geq (\tfrac{k}{4})^{\frac{r}{r - 1}}:= f(k)$.
Since $f$ is $\frac{r}{r - 1}$-bounded, the proof is complete
by Theorem \ref{main1} with $\mathcal{P}$ the property of $K_{r + 1}$-free graphs. \qed

\section{Concluding remarks}

$\bullet$ In this paper, we have shown that the length of a longest cycle and the length of a longest interval of lengths of cycles in $k$-chromatic
graphs $G$ are large when $G$ lacks certain subgraphs. In particular, when $G$ has no triangles,
this yields a proof of Conjecture~1 (in a stronger form). We believe that the following holds.

\begin{conj}
Let $G$ be a $k$-chromatic triangle-free graph and let $n_k$ be the minimum number of vertices in a $k$-chromatic
triangle-free graph. Then $G$ contains a cycle of length at least $n_k - o(n_k)$.
\end{conj}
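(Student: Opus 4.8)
The plan is to deduce the conjecture from Theorem~\ref{main1}(i); the only missing ingredient is a lower bound on $n_k$ matching the upper bound $n_k\le(4+o(1))k^2\log k$ that follows from the triangle-free process of Kim~\cite{K}, Bohman--Keevash~\cite{BK} and Fiz Pontiveros--Griffiths--Morris~\cite{FGM}. Suppose one could show that for every $\delta>0$ there is an $m(\delta)$ such that $n_k\ge(4-\delta)k^2\log k$ for all $k\ge m(\delta)$. For each fixed $\delta<4$ the function $f(x)=(4-\delta)x^2\log x$ is $3$-bounded by Lemma~\ref{3bounded}, and when $\mathcal P$ is the class of triangle-free graphs we have $n_{\mathcal P}(k)=n_k\ge f(k)$ for $k\ge m(\delta)$. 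Thus Theorem~\ref{main1}(i), applied with $\alpha=3$, $m=m(\delta)$ and this $f$, yields for every $\varepsilon>0$ and all $k\ge k_1(\varepsilon,3,m(\delta))$ a cycle of length at least $(1-\varepsilon)(4-\delta)k^2\log k$ in every $k$-chromatic triangle-free graph. Comparing with $n_k\le(4+o(1))k^2\log k$ and noting that $\varepsilon$ and $\delta$ may be taken arbitrarily small, this produces a cycle of length $n_k-o(n_k)$, which is exactly the conjecture. Only part~(i) of Theorem~\ref{main1} is needed, since no statement about consecutive lengths is required here.

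So the whole problem is to improve the constant in Lemma~\ref{ramsey} from $\tfrac14$ to $4$. That bound is obtained by feeding Shearer's independence-number estimate (Lemma~\ref{Sh}) into the Jensen--Toft colouring procedure (Lemma~\ref{JT}), and sharpening it would require removing two constant-factor losses. First, one needs a better lower bound on the independence number of an $n$-vertex triangle-free graph: the triangle-free process suggests the optimal bound is a constant factor above Shearer's, which is equivalent to the statement that the Ramsey number $r(K_3,K_t)$ is a constant factor below Shearer's upper bound $(1+o(1))t^2/\log t$. Second, even with the optimal independence bound, iterated extraction of maximum independent sets overestimates the chromatic number by a further constant factor, because a triangle-free graph of large chromatic number need not have all of its induced subgraphs extremal for their orders; removing this loss would need a colouring argument that goes beyond Lemma~\ref{JT} (morally, one wants $\chi(G)\le(1+o(1))|V(G)|/\alpha(G)$ in the relevant regime).

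I expect the Ramsey-theoretic step to be the real obstacle: Shearer's upper bound on $r(K_3,K_t)$ has resisted improvement beyond lower-order terms for a long time, and matching it with the triangle-free-process lower bound is a well-known open problem. A more structural alternative would be to argue directly that the near-extremal triangle-free graphs --- the triangle-free process graph, which is nearly regular and pseudorandom --- contain near-spanning cycles, for instance via a Hamiltonicity criterion for pseudorandom graphs; but this would only handle the extremal examples, not an arbitrary $k$-chromatic triangle-free graph, and Hamiltonicity of the triangle-free process graph is itself not known. For these reasons the cleanest route still seems to be to pin down $n_k$ --- essentially, $r(K_3,K_t)$ --- asymptotically, and then quote Theorem~\ref{main1}.
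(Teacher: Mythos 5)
The statement you were asked to prove is Conjecture~2 from the paper's concluding remarks, and the authors do not prove it --- it is posed as an open problem. Your analysis correctly arrives at the same conclusion: what you have written is not a proof but a reduction to an open Ramsey-theoretic question, together with an honest explanation of why that question is out of reach, and this matches the paper's own stance.

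One refinement is worth flagging. You propose to establish $n_k \geq (4-\delta)k^2\log k$, i.e.\ to match the triangle-free-process constant, and you rightly identify this as equivalent, up to lower-order terms, to determining $r(3,t)$ asymptotically. But the paper's framework needs less. As the authors remark immediately after Theorem~\ref{main1}, if $n_{\mathcal P}(k)=n_k$ were itself $\alpha$-bounded for some $\alpha\geq 1$, then applying Theorem~\ref{main1}(i) with $f=n_{\mathcal P}$ would already give a cycle of length $(1-\varepsilon)n_k$ for large $k$, which is the conjecture, with no knowledge of the leading constant at all. This $\alpha$-boundedness (monotonicity together with $y^{\alpha} n_x\geq x^{\alpha} n_y$ for $y\geq x\geq 3$) is a regularity hypothesis on the Ramsey function rather than a determination of its asymptotics; it too is open, and the known two-sided bounds $(\tfrac14-o(1))k^2\log k\leq n_k\leq(4+o(1))k^2\log k$ do not imply it, since they allow oscillations within the constant factor that would violate the inequality for every fixed $\alpha$. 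Still, it is formally weaker than the sharp asymptotics you ask for, and is the minimal missing ingredient in this framework. Apart from that, your reduction is correct: Lemma~\ref{3bounded} gives $3$-boundedness of the surrogate $f(x)=(4-\delta)x^2\log x$, and Theorem~\ref{main1}(i) with $\alpha=3$ and $m=m(\delta)$ finishes exactly as you describe, modulo the open input.
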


$\bullet$ If Shearer's bound~\cite{Shearer} is tight, i.e.,
$n_k \sim \frac{1}{4}k^2 \log k$, then the lower bound on the length of the longest cycle in any $k$-chromatic triangle-free
graph in Theorem \ref{main} would be tight.

\end{document}